\newtheorem{definition}{Definition}
\newtheorem{example}{Example}
\newtheorem{lemma}{Lemma}
\newtheorem{proposition}{Proposition}
\newtheorem{remark}{Remark}
\newtheorem{theorem}{Theorem}
\title{Discrete linear multiple recurrence with multi-periodic coefficients}
\author{Cristian Ghiu$^1$, Constantin Udri\c ste$^2$, Raluca Tulig\u{a}$^2$}
\date{}
\begin{document}

\maketitle

\begin{center}
{\footnotesize
$^1$University Politehnica of Bucharest,
Faculty of Applied Sciences,

Department of Mathematical Methods and Models,
Splaiul Independentei 313,

Bucharest 060042, Romania;
e-mail: crisghiu@yahoo.com

\vspace{0.1 cm}
$^2$University Politehnica of Bucharest,
Faculty of Applied Sciences, Department of Mathematics-Informatics,
Splaiul Independentei 313,
Bucharest 060042, Romania;
e-mails: udriste@mathem.pub.ro; ralucacoada@yahoo.com

}
\end{center}

\begin{abstract}
The aim of our paper is to formulate and solve problems concerning linear multiple periodic recurrence equations.
Among other things, we discuss in detail the cases with periodic and multi-periodic coefficients,
highlighting in particular the theorems of Floquet type. For this aim, we find specific forms for the fundamental matrix.
Explicitly monodromy matrix is given, and its eigenvalues (called Floquet multipliers) are shown. The
Floquet point of view brings about an important simplification:
the initial linear multiple recurrence system is reduced to
another linear multiple recurrence system, with constant coefficients along partial directions.
The results are applied to the discrete multitime Samuelson-Hicks models with constant,
respectively multi-periodic, coefficients, in order to find bivariate sequences with economic meaning.
\end{abstract}

{\bf AMS Subject Classification (2010)}: 39A06, 65Q99.

{\bf Keywords}: multitime multiple recurrence, multiple linear recurrence equation,
fundamental matrix, multi-periodic coefficients, Floquet theory.

\section{Discrete multitime multiple recurrence}

A multivariate recurrence relation is an equation that recursively defines a multivariate sequence,
once one or more initial terms are given: each further term of the sequence is defined as a function
of the preceding terms. Some simply defined recurrence relations can have very complex (chaotic)
behaviors, and they are a part of the field of mathematics known as nonlinear analysis.
We can use such recurrences including the Differential Transform Method to
solve PDEs system with initial conditions.

In this paper we shall continue the study of discrete multitime multiple recurrence,
giving original results regarding Floquet theory
of linear multiple periodic recurrences. As an example of application in economics, we formulate and solve
a Samuelson-Hicks multitime multiplier-accelerator model. Such problems remain an area of active current research
in our group. The scientific sources used by us are: general recurrence theory \cite{BP},
\cite{E}, \cite{HK}, \cite{W}, \cite{YN}, \cite{WCM},
our results regarding the diagonal multitime recurrence
\cite{GTUT}, \cite{GTU}, Floquet theory
\cite{Floquet}, \cite{K}, \cite{markley}, \cite{M}, \cite{yakubovich}
and multitime dynamical systems \cite{U1}-\cite{U7}.

\section{Linear multitime multiple recurrence with multi-periodic coefficients}

Let $m\geq 1$ be an integer number.
We denote ${\bf 1}=(1,1,\ldots, 1) \in \mathbb{Z}^m$. Also, for each
$\alpha \in \{ 1,2, \ldots, m \}$, we denote
$1_{\alpha}=(0,\ldots,0,1,0,\ldots, 0)\in \mathbb{Z}^m$, i.e.,
$1_{\alpha}$ has $1$ on the position $\alpha$ and $0$ otherwise.

On $\mathbb{Z}^m$, we define the relation $``\leq"$:
for $t=(t^1,\ldots, t^m)$,
$s=(s^1,\ldots, s^m)$,
\begin{equation*}
s\leq t\,\,\,\, \mbox{if}\,\,\,
s^{\alpha} \leq t^{\alpha},\,\,
\forall
\alpha \in \{ 1,2, \ldots, m \}.
\end{equation*}
One observes that $``\leq"$ is a partial order relation on $\mathbb{Z}^m$.

Let us formulate a theory similar to those of Floquet which brings important simplification:
in conditions of multiple periodicity or periodicity, a linear multitime multiple recurrence system
$$
x(t+1_{\alpha}) = A_{\alpha}(t)x(t),
\quad
\forall t\geq t_0,
\,\,
\forall \alpha \in \{1,2, \ldots, m\}
$$
is reduced to a linear multitime multiple recurrence system
$$
y(t+1_{\alpha}) = B_{\alpha} y(t),
\quad
\forall t\geq t_0,
\,\,
\forall \alpha \in \{1,2, \ldots, m\},
$$
with constant coefficients.

We denote by $\mathcal{Z}$ one of the sets
$\mathbb{Z}^m$ or
$ \big\{ t \in \mathbb{Z}^m \, \big|\,
t \geq t_1 \big\}$  (with $t_1 \in \mathbb{Z}^m$).

Consider the functions
$A_\alpha \colon \mathcal{Z} \to \mathcal{M}_{n}(K)$,
$\alpha \in \{1,2, \ldots, m\}$,
which define the linear homogeneous recurrence
\begin{equation}
\label{rec.alfa.omog}
x(t+1_\alpha) = A_\alpha(t)x(t),
\quad
\forall \alpha \in \{1,2, \ldots, m\},
\end{equation}
with the unknown function
$x \colon \big\{ t \in \mathcal{Z} \, \big|\,
t \geq t_0 \big\} \to K^n=\mathcal{M}_{n,1}(K)$,
$t_0 \in \mathcal{Z}$.

We recall some notions and results of work \cite{GTuU}:
Theorem \ref{alfa.t5},
Propositions \ref{alfa.o3}, \ref{alfa.p7}, \ref{alfa.p8}
and Definition \ref{alfa.d1}.

\begin{theorem}
\label{alfa.t5}
$a)$ If, for any $(t_0,x_0) \in \mathcal{Z} \times K^n$,
there exists at least one function
$x \colon \big\{ t \in \mathcal{Z} \, \big|\,
t \geq t_0 \big\} \to K^n$, which,
for any $t\geq t_0$, verifies the recurrence
{\rm (\ref{rec.alfa.omog})} and the condition $x(t_0)=x_0$, then
\begin{equation}
\label{ecalfat5.1}
A_\alpha(t+1_\beta)A_\beta(t)
=
A_\beta(t+1_\alpha)A_\alpha(t),
\end{equation}
$$
\forall t \in  \mathcal{Z},
\quad
\forall \alpha, \beta \in \{1,2, \ldots, m\}.
$$

$b)$ If the relations {\rm (\ref{ecalfat5.1})}, are satisfied,
then, for any $(t_0,x_0) \in \mathcal{Z} \times K^n$,
there exists a unique function $x \colon \big\{ t \in \mathcal{Z} \, \big|\,
t \geq t_0 \big\} \to K^n$, which,
for any $t\geq t_0$ verifies the recurrence
{\rm (\ref{rec.alfa.omog})} and the condition $x(t_0)=x_0$.

$c)$ Let us suppose that
$\mathcal{Z}= \mathbb{Z}^m$
and that, for any
$\alpha\in \{1,2, \ldots, m\}$
and any $t\in \mathbb{Z}^m$,
the matrix $A_{\alpha}(t)$ is invertible.
If the relations {\rm (\ref{ecalfat5.1})}, are satisfied,
then, for any pair
$(t_0,x_0) \in \mathbb{Z}^m \times K^n$,
there exists a unique function
$x \colon \mathbb{Z}^m \to K^n$,
which, for any $t \in \mathbb{Z}^m $, verifies the relations
{\rm (\ref{rec.alfa.omog})},
and also the condition $x(t_0)=x_0$.
\end{theorem}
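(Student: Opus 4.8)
The plan is to treat the three parts in increasing order of difficulty, since each builds naturally on the previous. For part (a), I would start from the hypothesis that a solution exists for every initial pair $(t_0,x_0)$, fix an arbitrary $t\in\mathcal{Z}$, $\alpha,\beta$, and an arbitrary $x_0\in K^n$, and take $t_0=t$. Applying the recurrence \eqref{rec.alfa.omog} in direction $\alpha$ and then in direction $\beta$ gives $x(t+1_\alpha+1_\beta)=A_\beta(t+1_\alpha)A_\alpha(t)x_0$; doing it in the other order gives $x(t+1_\alpha+1_\beta)=A_\alpha(t+1_\beta)A_\beta(t)x_0$. Since both compute the same vector $x(t+1_\alpha+1_\beta)$ (the solution is a genuine function on its domain), the two matrices agree on every $x_0\in K^n$, hence are equal, which is \eqref{ecalfat5.1}.

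For part (b), the key is to construct the solution explicitly by iterating along a path in $\mathbb{Z}^m$ from $t_0$ to an arbitrary $t\geq t_0$, and to use the compatibility conditions \eqref{ecalfat5.1} to prove that the value obtained is independent of the chosen monotone path. I would define, for $t\geq t_0$, a candidate value $x(t)=\Phi(t,t_0)x_0$, where $\Phi(t,t_0)$ is the ordered product of the matrices $A_\alpha$ read off along any staircase path from $t_0$ to $t$ (e.g. first increase coordinate $1$ up to $t^1$, then coordinate $2$, and so on). The main obstacle — and the heart of the argument — is the well-definedness: one must show that swapping two adjacent unit steps in the path does not change the product, and this is exactly what \eqref{ecalfat5.1} encodes; a general monotone path can be transformed into the canonical staircase by finitely many such adjacent transpositions, so an induction on the number of transpositions (or on $\|t-t_0\|_1$) finishes it. Uniqueness is immediate: \eqref{rec.alfa.omog} determines $x(t+1_\alpha)$ from $x(t)$, so by induction on $\|t-t_0\|_1$ the values of any two solutions with the same $x(t_0)$ coincide everywhere.

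For part (c), the extra hypotheses are $\mathcal{Z}=\mathbb{Z}^m$ and invertibility of every $A_\alpha(t)$, and the goal is to extend the solution from the "forward cone" $\{t\geq t_0\}$ to all of $\mathbb{Z}^m$. I would use the invertibility to rewrite \eqref{rec.alfa.omog} as $x(t)=A_\alpha(t)^{-1}x(t+1_\alpha)$, which lets one step in the negative coordinate directions as well. Thus one defines $x(t)$ for arbitrary $t\in\mathbb{Z}^m$ by choosing a path from $t_0$ to $t$ using steps $\pm 1_\alpha$ and taking the corresponding ordered product of the matrices $A_\alpha(\cdot)^{\pm 1}$; a convenient bookkeeping device is to pick an auxiliary point $t_{\min}$ with $t_{\min}\leq t_0$ and $t_{\min}\leq t$, go from $t_0$ down to $t_{\min}$ and then up to $t$, reducing everything to part (b). Well-definedness again reduces to invariance under adjacent transpositions of steps: the relation \eqref{ecalfat5.1} handles two forward steps, and its inverse-matrix consequences (obtained by multiplying \eqref{ecalfat5.1} on both sides by appropriate inverses, e.g. $A_\beta(t)A_\alpha(t+1_\beta)^{-1}=A_\alpha(t)A_\beta(t+1_\alpha)^{-1}$ and the fully inverted identity) handle the mixed and the two-backward cases. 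The main obstacle here is organizing this case analysis cleanly; once the product is path-independent, that it satisfies \eqref{rec.alfa.omog} on all of $\mathbb{Z}^m$ and matches $x_0$ at $t_0$ is immediate, and uniqueness follows as in (b) by propagating along paths in both directions.
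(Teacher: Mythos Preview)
The paper does not actually prove this theorem: it is explicitly stated as a result \emph{recalled} from the companion work \cite{GTuU} (see the sentence just before Theorem~\ref{alfa.t5}), so there is no in-paper proof to compare against. Your plan is correct and is the standard argument one would expect for such a discrete integrability statement: part~(a) by evaluating $x(t+1_\alpha+1_\beta)$ two ways for arbitrary $x_0$, part~(b) by path-independence of the ordered product along monotone lattice paths (adjacent transpositions being exactly \eqref{ecalfat5.1}), and part~(c) by using invertibility to allow backward steps and reducing to~(b).
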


\begin{proposition}
\label{alfa.o3}
Suppose that the relations {\rm (\ref{ecalfat5.1})} hold true.

For each $t_0 \in \mathcal{Z}$ and $X_0 \in \mathcal{M}_n(K)$
there exists a unique matrix solution
$$X \colon
\big\{ t \in \mathcal{Z} \, \big|\, t \geq t_0 \big\}
\to \mathcal{M}_n(K)$$
of the recurrence
\begin{equation}
\label{ecalfao3.1}
X(t+1_\alpha) = A_\alpha(t)X(t),
\quad
\forall \alpha \in \{1,2, \ldots, m\},
\end{equation}
with the condition $X(t_0)=X_0$.
\end{proposition}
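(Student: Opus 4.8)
The plan is to reduce the matrix recurrence (\ref{ecalfao3.1}) to $n$ independent copies of the vector recurrence (\ref{rec.alfa.omog}), using that each $A_\alpha(t)$ acts on $X(t)$ by left multiplication, hence columnwise. I would write the columns of $X_0$ as $x_0^{(1)}, \ldots, x_0^{(n)} \in K^n$, so that $X_0 = \big[\, x_0^{(1)} \mid \cdots \mid x_0^{(n)} \,\big]$. Since the compatibility relations (\ref{ecalfat5.1}) are assumed to hold, Theorem \ref{alfa.t5}$\,b)$ applies to each column: for every $j \in \{1,\ldots,n\}$ there is a unique function $x^{(j)} \colon \{ t \in \mathcal{Z} \mid t \ge t_0 \} \to K^n$ that satisfies (\ref{rec.alfa.omog}) for all $t \ge t_0$ and has $x^{(j)}(t_0) = x_0^{(j)}$.

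I would then set $X(t) := \big[\, x^{(1)}(t) \mid \cdots \mid x^{(n)}(t) \,\big]$ for $t \in \mathcal{Z}$, $t \ge t_0$. For every $\alpha \in \{1,\ldots,m\}$, the $j$-th column of $A_\alpha(t) X(t)$ is $A_\alpha(t)\, x^{(j)}(t) = x^{(j)}(t+1_\alpha)$, i.e.\ the $j$-th column of $X(t+1_\alpha)$; thus $X(t+1_\alpha) = A_\alpha(t) X(t)$ holds for all admissible $t$ and all $\alpha$, while $X(t_0) = X_0$ by construction. This proves existence.

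For uniqueness, suppose $\widetilde{X}$ is another solution of (\ref{ecalfao3.1}) with $\widetilde{X}(t_0) = X_0$. Reading the identity $\widetilde{X}(t+1_\alpha) = A_\alpha(t)\widetilde{X}(t)$ columnwise shows that the $j$-th column of $\widetilde{X}$ is a solution of (\ref{rec.alfa.omog}) taking the value $x_0^{(j)}$ at $t_0$; by the uniqueness asserted in Theorem \ref{alfa.t5}$\,b)$ it coincides with $x^{(j)}$ for every $j$, hence $\widetilde{X} = X$.

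I do not anticipate a genuine obstacle here: the essential observations are that hypothesis (\ref{ecalfat5.1}) is exactly what makes Theorem \ref{alfa.t5}$\,b)$ applicable, and that left multiplication decouples the matrix system into $n$ vector systems. As an alternative to the columnwise reduction, one could proceed by induction on $\sum_{\alpha=1}^{m}(t^\alpha - t_0^\alpha)$, building $X(t)$ by applying the shifts $1_\alpha$ one coordinate at a time and invoking (\ref{ecalfat5.1}) to check that the outcome does not depend on the order in which these shifts are performed; this mirrors the proof of Theorem \ref{alfa.t5}$\,b)$ but is longer.
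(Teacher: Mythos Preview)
Your argument is correct. The columnwise reduction to Theorem~\ref{alfa.t5}\,$b)$ is the natural way to pass from the vector recurrence (\ref{rec.alfa.omog}) to the matrix recurrence (\ref{ecalfao3.1}), and both existence and uniqueness are handled cleanly.

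Note, however, that the paper does not actually prove Proposition~\ref{alfa.o3}: it is one of the results explicitly \emph{recalled} from the companion work \cite{GTuU} (see the sentence preceding Theorem~\ref{alfa.t5}). So there is no ``paper's own proof'' to compare against here. Your proof is exactly the standard one and would be the expected argument in \cite{GTuU} as well; the alternative induction on $\sum_\alpha (t^\alpha - t_0^\alpha)$ that you sketch at the end is indeed longer and merely re-derives Theorem~\ref{alfa.t5}\,$b)$ in the matrix setting rather than invoking it.
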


For each $t_0 \in \mathcal{Z}$, we denote
$$\chi(\,\cdot\, , t_0) \colon
\big\{ t \in \mathcal{Z} \, \big|\, t \geq t_0 \big\}
\to \mathcal{M}_n(K),$$
the unique matrix solution of the recurrence {\rm (\ref{ecalfao3.1})},
which verifies $X(t_0)=I_n$.

\begin{definition}
\label{alfa.d1}
Suppose that the relations {\rm (\ref{ecalfat5.1})} hold true.
The matrix function
$$
\chi(\,\cdot\, , \cdot\,)
\colon \big\{ (t,s) \in \mathcal{Z} \times \mathcal{Z}\, \big|\,
t \geq s \big\} \to \mathcal{M}_n(K)
$$
is called transition (fundamental) matrix associated to the linear homogeneous recurrence
{\rm (\ref{rec.alfa.omog})}.
\end{definition}

For $\alpha \in \{1,2, \ldots, m\}$
and $k \in \mathbb{N}$, we define the function
$$C_{\alpha,\, k} \colon \mathcal{Z} \to \mathcal{M}_{n}(K),$$
\begin{equation}
\label{Cdef}
C_{\alpha,\, k}(t)=
\left
\{\begin{array}{ccc}
\displaystyle \prod_{j=1}^k A_\alpha(t+(k-j)\cdot 1_\alpha) & \hbox{if} &k \geq 1\\
I_n & \hbox{if}& k=0.
\end{array}
\right.
\end{equation}

\begin{proposition}
\label{alfa.p7}
Suppose that the relations {\rm (\ref{ecalfat5.1})} hold true.

The matrix functions $\chi(\cdot)$ and $C_{\alpha,k}(\cdot)$ have the properties:

$a)$ $\chi(t,s)\chi(s,r)=\chi(t,r)$,\,\,
$\forall t,s,r \in  \mathcal{Z}$, with $t\geq s\geq r$.

$b)$ $\chi(s,s)=I_n$,\,\, $\forall s \in \mathcal{Z}$.

$c)$ $\chi(t+k \cdot 1_\alpha , s)=
C_{\alpha,\, k}(t) \cdot \chi(t,s)$,\,\,
$\forall k \in \mathbb{N}$,
$\forall t,s \in  \mathcal{Z}$, with $t\geq s$.

$d)$ $C_{\alpha,\, k}(t)=\chi(t+k \cdot 1_\alpha , t)$,\,\,
$\forall k \in \mathbb{N}$, $\forall t \in  \mathcal{Z}$.

$e)$ $C_{\alpha,\, k}(t+p \cdot 1_{\beta})
C_{\beta,\, p}(t)
=
C_{\beta,\, p}(t+k \cdot 1_\alpha)
C_{\alpha,\, k}(t)$,\,\,
$\forall k,p \in \mathbb{N}$, $\forall t \in \mathcal{Z}$.

$f)$ For any $t, s \in \mathcal{Z}$ with $t \geq s$, we have
\begin{equation*}
\chi(t,s)=
\prod_{\alpha =1}^{m-1}
C_{\alpha, t^\alpha - s^\alpha}(s^1,...,s^\alpha, t^{\alpha +1},...,t^m)
\cdot
C_{m,\, t^m-s^m}(s^1,s^2,\ldots, s^{m-1},s^m).
\end{equation*}

$g)$ For any $t, s \in \mathcal{Z}$, with $t \geq s$, the fundamental matrix
$\chi(t,s)$ is invertible if and only if, for any $\alpha \in \{1,2, \ldots, m\}$
and any $t\in \mathcal{Z}$, the matrix $A_{\alpha}(t)$ is invertible.

$h)$ For any $\alpha \in \{1,2, \ldots, m\}$,
any $k\in \mathbb{N}$ and for any $t\in \mathcal{Z}$, the matrix
$C_{\alpha,\, k}(t)$ is invertible if and only if, for any $\alpha \in \{1,2, \ldots, m\}$
and any $t\in \mathcal{Z}$, the matrix $A_{\alpha}(t)$ is invertible.

$i)$ If $\forall \alpha \in \{1,2, \ldots, m\}$,
$\forall t\in \mathcal{Z}$, the matrix $A_{\alpha}(t)$ is invertible, then
$\forall t,s,t_0 \in  \mathcal{Z}$, with $t\geq s\geq t_0$, we have
$\chi(t,s)=\chi(t,t_0)\chi(s,t_0)^{-1}$.

$j)$ If $\forall \alpha \in \{1,2, \ldots, m\}$,
the matrix functions $A_{\alpha}(\cdot)$ are constant, then
\begin{equation*}
C_{\alpha,\, k}(t)=A_{\alpha}^k,
\quad
\forall k\in \mathbb{N},\,
\forall t\in \mathbb{Z}^m,\,
\forall \alpha \in \{1,2, \ldots, m\},
\end{equation*}
\begin{equation*}
\chi(t,s)=
A_1^{(t^1-s^1)}A_2^{(t^2-s^2)}
\cdot
\ldots
\cdot
A_m^{(t^m-s^m)},
\quad
\forall
t, s \in \mathbb{Z}^m,\,\,
\mbox{with}\,\, t \geq s.
\end{equation*}
\end{proposition}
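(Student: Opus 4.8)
The plan is to prove the statement of Proposition \ref{alfa.p7} part by part, noting that items $a)$ through $f)$ are structural identities for the transition matrix and the $C_{\alpha,k}$, items $g)$ and $h)$ are invertibility equivalences, and items $i)$ and $j)$ are consequences specializing to the invertible and constant-coefficient cases. I would first establish $b)$, which is immediate from the definition of $\chi(\,\cdot\,,s)$ as the solution with initial value $I_n$ at $s$. Then $a)$: for fixed $r$ and $s$ with $s\geq r$, both $t\mapsto\chi(t,s)\chi(s,r)$ and $t\mapsto\chi(t,r)$ satisfy the matrix recurrence (\ref{ecalfao3.1}) on $\{t\geq s\}$ and agree at $t=s$ (using $b)$), so by the uniqueness in Proposition \ref{alfa.o3} they coincide.

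Next I would treat $c)$ by induction on $k$: for $k=0$ it is $b)$ applied trivially, and the inductive step uses the recurrence $\chi(t+(k{+}1)1_\alpha,s)=A_\alpha(t+k\cdot 1_\alpha)\chi(t+k\cdot 1_\alpha,s)$ together with the recursion $C_{\alpha,k+1}(t)=A_\alpha(t+k\cdot 1_\alpha)C_{\alpha,k}(t)$ that follows from (\ref{Cdef}). Item $d)$ is then $c)$ with $s=t$ combined with $b)$. For $e)$, I would apply $c)$ twice in the two possible orders to express $\chi(t+k\cdot 1_\alpha+p\cdot 1_\beta,s)$, and then cancel $\chi(t,s)$; to make the cancellation rigorous without assuming invertibility one chooses $s=t$ and uses $b)$ directly, i.e. $C_{\alpha,k}(t+p\cdot 1_\beta)C_{\beta,p}(t)=\chi(t+k\cdot 1_\alpha+p\cdot 1_\beta,t)=C_{\beta,p}(t+k\cdot 1_\alpha)C_{\alpha,k}(t)$ using $d)$ on both sides. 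For $f)$, I would iterate $a)$ along a monotone lattice path from $s$ to $t$ that increases coordinate $m$ first, then $m-1$, and so on down to coordinate $1$ (or the order dictated by the stated formula), and on each straight segment apply $d)$ to identify the factor with the appropriate $C_{\alpha,k}$; the only care needed is bookkeeping of the intermediate vertices' coordinates so that they match the arguments displayed in the statement.

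For $g)$ and $h)$, one direction is clear from $f)$: if every $A_\alpha(t)$ is invertible then each $C_{\alpha,k}(t)$ is a product of invertible matrices hence invertible, and $\chi(t,s)$ in the factored form of $f)$ is a product of such, hence invertible. For the converse I would argue contrapositively: if some $A_{\alpha_0}(t_0)$ is singular, then taking $t=t_0+1_{\alpha_0}$, $s=t_0$ (or the appropriate $C_{\alpha_0,1}(t_0)=A_{\alpha_0}(t_0)$) exhibits a non-invertible fundamental matrix / non-invertible $C$, and one propagates this: using $d)$ and $e)$ (the commutation of the $C$'s) any $C_{\alpha,k}(t)$ with $k\geq 1$ contains $A_{\alpha_0}(t_0)$ as a left factor after reordering, so its rank is deficient. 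Item $i)$ follows from $a)$ by right-multiplying $\chi(t,t_0)=\chi(t,s)\chi(s,t_0)$ by $\chi(s,t_0)^{-1}$, which exists by $g)$. Finally $j)$: when the $A_\alpha$ are constant, (\ref{Cdef}) collapses each product to $A_\alpha^k$, and feeding this into the factorization of $f)$ gives $\chi(t,s)=A_1^{t^1-s^1}\cdots A_m^{t^m-s^m}$.

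The main obstacle I anticipate is purely organizational rather than conceptual: making the cancellation in $e)$ and the path-iteration in $f)$ fully rigorous in the absence of invertibility, which forces one to always route every identity through evaluations $\chi(\,\cdot\,,s)$ at $s$ equal to the current base point and to invoke uniqueness (Proposition \ref{alfa.o3}) rather than "dividing." Care with the index bookkeeping in $f)$ — precisely which coordinates of $s$ versus $t$ appear in the argument of each $C_{\alpha,\,t^\alpha-s^\alpha}$ — is the one spot where an error could easily slip in, so I would verify it explicitly for $m=2$ and then write the general case as a clean induction on $m$ using $a)$ to peel off the last coordinate.
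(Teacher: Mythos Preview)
The paper does not actually prove Proposition~\ref{alfa.p7}; it is explicitly listed among the results \emph{recalled} from \cite{GTuU} (see the sentence immediately preceding Theorem~\ref{alfa.t5}). So there is no proof in this paper to compare your proposal against. Judged on its own, your plan is the standard one and is essentially correct: uniqueness from Proposition~\ref{alfa.o3} for $a)$, induction on $k$ for $c)$, $d)$ from $c)$ with $s=t$, computing $\chi(t+k\cdot 1_\alpha+p\cdot 1_\beta,t)$ two ways for $e)$, a lattice-path iteration of $a)$ and $d)$ for $f)$, and the obvious specializations for $i)$, $j)$.

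There is one genuine slip. In your converse argument for $h)$ you claim that if some $A_{\alpha_0}(t_0)$ is singular then ``any $C_{\alpha,k}(t)$ with $k\geq 1$ contains $A_{\alpha_0}(t_0)$ as a left factor after reordering.'' That is false: by definition $C_{\alpha,k}(t)$ involves only the matrices $A_\alpha(t),A_\alpha(t+1_\alpha),\ldots,A_\alpha(t+(k-1)1_\alpha)$, so for $\alpha\neq\alpha_0$ the factor $A_{\alpha_0}(t_0)$ simply does not appear, and the commutation relation $e)$ cannot manufacture it. Fortunately the ``propagation'' step is unnecessary. The statements $g)$ and $h)$ are equivalences between two \emph{universal} assertions (``$\chi(t,s)$ is invertible for all $t\geq s$'' iff ``$A_\alpha(t)$ is invertible for all $\alpha,t$'', and similarly for $C_{\alpha,k}$). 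For the converse direction you only need to exhibit a \emph{single} non-invertible instance, and you already did: $\chi(t_0+1_{\alpha_0},t_0)=C_{\alpha_0,1}(t_0)=A_{\alpha_0}(t_0)$. Drop the propagation sentence and the argument is complete.
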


\begin{proposition}
\label{alfa.p8}
We consider the functions $A_\alpha \colon \mathcal{Z} \to \mathcal{M}_{n}(K)$,
$\alpha \in \{1, \ldots, m\}$, for which the relations {\rm (\ref{ecalfat5.1})} are
satisfied. Let $(t_0,x_0) \in \mathcal{Z} \times K^n$.
Then, the unique function
$x \colon \big\{ t \in \mathcal{Z} \, \big|\,
t \geq t_0 \big\} \to K^n=\mathcal{M}_{n,1}(K)$,
which, for any $t \geq t_0$, verifies the recurrence {\rm (\ref{rec.alfa.omog})}
and the initial condition $x(t_0)=x_0$, is
\begin{equation*}
x(t)=\chi(t,t_0)x_0,
\quad
\forall t \geq t_0.
\end{equation*}

If $\forall \alpha \in \{1,2, \ldots, m\}$,
the matrix functions $A_{\alpha}(\cdot)$ are constants,
then
\begin{equation}
\label{solautconst}
x(t)=
A_1^{(t^1-t_0^1)}A_2^{(t^2-t_0^2)}
\cdot
\ldots
\cdot
A_m^{(t^m-t_0^m)}x_0,
\quad
\forall t \geq t_0.
\end{equation}
\end{proposition}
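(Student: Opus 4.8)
The plan is to prove the two assertions in the natural order: first identify the closed form $x(t) = \chi(t,t_0) x_0$ for the general (multi-periodic) case, and then specialize to constant coefficients. For the first part, the key observation is that $\chi(\cdot, t_0)$ is by definition (see Proposition \ref{alfa.o3} and the paragraph following it) the unique matrix solution of the recurrence \eqref{ecalfao3.1} satisfying $X(t_0) = I_n$. So I would set $x(t) := \chi(t,t_0) x_0$ and verify directly that this function satisfies both requirements of Theorem \ref{alfa.t5}$b)$: applying \eqref{ecalfao3.1} on the right by the constant column vector $x_0$ gives $x(t+1_\alpha) = \chi(t+1_\alpha, t_0) x_0 = A_\alpha(t) \chi(t,t_0) x_0 = A_\alpha(t) x(t)$ for every $\alpha$, and at $t = t_0$ we get $x(t_0) = \chi(t_0,t_0) x_0 = I_n x_0 = x_0$ using Proposition \ref{alfa.p7}$b)$. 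Since the relations \eqref{ecalfat5.1} are assumed, Theorem \ref{alfa.t5}$b)$ guarantees uniqueness of such a function, so the $x$ in the statement must coincide with $\chi(\cdot,t_0)x_0$.

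For the second part, I would invoke Proposition \ref{alfa.p7}$j)$: when all $A_\alpha(\cdot)$ are constant, the fundamental matrix has the explicit product form $\chi(t,s) = A_1^{t^1-s^1} A_2^{t^2-s^2} \cdots A_m^{t^m-s^m}$ for $t \geq s$. Substituting $s = t_0$ into this identity and combining with the formula $x(t) = \chi(t,t_0) x_0$ already established yields \eqref{solautconst} immediately.

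I do not anticipate any real obstacle here: the statement is essentially a bookkeeping corollary of the results already quoted from \cite{GTuU}. The only point requiring a small amount of care is the passage from the matrix recurrence \eqref{ecalfao3.1} to the vector recurrence \eqref{rec.alfa.omog} — one must note that multiplying a matrix identity on the right by a fixed vector $x_0 \in K^n$ preserves the recurrence, and that the initial condition transforms correctly. Everything else is a direct citation: existence and uniqueness from Theorem \ref{alfa.t5}$b)$, the value $\chi(t_0,t_0)=I_n$ from Proposition \ref{alfa.p7}$b)$, and the constant-coefficient product formula from Proposition \ref{alfa.p7}$j)$.
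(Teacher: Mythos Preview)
Your argument is correct and is exactly the natural proof one would give. Note, however, that the paper itself does not supply a proof of Proposition~\ref{alfa.p8}: it is explicitly listed among the results \emph{recalled} from the companion paper \cite{GTuU}, so there is no in-paper proof to compare against. Your write-up, which verifies that $t\mapsto\chi(t,t_0)x_0$ satisfies the recurrence and the initial condition and then invokes the uniqueness part of Theorem~\ref{alfa.t5}$b)$ (and Proposition~\ref{alfa.p7}$j)$ for the constant-coefficient case), is precisely the argument one expects and would serve perfectly well as the omitted proof.
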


\subsection{Case of multi-periodic coefficients}

\begin{definition}
\label{alfa.d2}
A function $f \colon \mathcal{Z} \to M$ is called multi-periodic if
there exists $(T_1,T_2,\ldots, T_m) \in \mathbb{N}^m \setminus \{0 \}$
such that
\begin{equation*}
f(t+T_{\alpha} \cdot 1_{\alpha})=f(t),
\quad \forall t\in \mathcal{Z},
\quad
\forall \alpha \in \{1,2, \ldots, m\},
\end{equation*}
i.e., $T_{1} \cdot 1_{1}$, $T_{2} \cdot 1_{2}$,
\ldots, $T_{m} \cdot 1_{m}$ are periods for the function $f$.
\end{definition}

\begin{proposition}
\label{alfa.p9}
Let $T=(T_1,T_2,\ldots, T_m)\in \mathbb{N}^m$, $T \neq 0$.

We consider the matrix functions
$A_\alpha \colon \mathcal{Z} \to \mathcal{M}_{n}(K)$,
$\alpha \in \{1,2, \ldots, m\}$, for which the relations {\rm (\ref{ecalfat5.1})} are satisfied.
Suppose that, $\forall \alpha,\beta \in \{1,2, \ldots, m\}$,
$\forall t \in \mathcal{Z}$, we have
$A_\alpha(t+T_{\beta} \cdot 1_{\beta})=A_\alpha(t)$.
Then

\vspace{0.1 cm}
$a)$ $C_{\alpha,\, k}(t+T_{\beta} \cdot 1_{\beta})=C_{\alpha,\, k}(t)$,
\quad $\forall \alpha,\beta \in \{1,2, \ldots, m\}$,
$\forall t \in \mathcal{Z}$;

\vspace{0.1 cm}
$b)$ $C_{\alpha,\, T_{\alpha}}(t)
C_{\beta,\, T_{\beta}}(t)
=
C_{\beta,\, T_{\beta}}(t)
C_{\alpha,\, T_{\alpha}}(t)$,\,\,
$\forall t \in  \mathcal{Z}$;

\vspace{0.1 cm}
$c)$ $\chi(t+T_{\alpha} \cdot 1_\alpha , s)=
\chi(t,s) \cdot
C_{\alpha,\, T_{\alpha}}(s)  $,\,\,
$\forall t,s \in  \mathcal{Z}$, with $t\geq s$.
\end{proposition}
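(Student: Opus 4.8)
The plan is to derive all three claims directly from the properties of $\chi$ and $C_{\alpha,k}$ collected in Proposition \ref{alfa.p7}, using the hypothesis $A_\alpha(t+T_\beta\cdot 1_\beta)=A_\alpha(t)$ as the only new input. For part $a)$, I would argue from the defining formula (\ref{Cdef}): when $k\geq 1$, we have $C_{\alpha,k}(t+T_\beta\cdot 1_\beta)=\prod_{j=1}^k A_\alpha\big(t+T_\beta\cdot 1_\beta+(k-j)\cdot 1_\alpha\big)$, and each factor equals $A_\alpha\big(t+(k-j)\cdot 1_\alpha\big)$ by multi-periodicity of $A_\alpha$ (applied at the point $t+(k-j)\cdot 1_\alpha\in\mathcal{Z}$, which is legitimate since $\mathcal{Z}$ is closed under adding nonnegative multiples of the $1_\gamma$). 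Hence the whole product is unchanged, and the case $k=0$ is trivial since $C_{\alpha,0}\equiv I_n$.

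For part $c)$, the idea is to apply $\chi(t+k\cdot 1_\alpha,s)=C_{\alpha,k}(t)\cdot\chi(t,s)$ from Proposition \ref{alfa.p7}$c)$ with $k=T_\alpha$, but with the base point shifted: more precisely, I would use property $c)$ in the form $\chi(t+T_\alpha\cdot 1_\alpha,s)=C_{\alpha,T_\alpha}(t)\cdot\chi(t,s)$ — wait, that gives the factor on the \emph{left}, whereas the claim wants it on the \emph{right} and evaluated at $s$. The clean route is instead to invoke property $f)$ (the explicit product formula for $\chi$) together with $d)$. Actually the most economical approach: by $a)$ of the present proposition, the map $t\mapsto C_{\alpha,T_\alpha}(t)$ is multi-periodic, hence so are all the matrix coefficients, and one checks that $s\mapsto\chi(t,s)C_{\alpha,T_\alpha}(s)$ satisfies the same recurrence in $t$ (for $t\geq s$) as $s\mapsto\chi(t+T_\alpha\cdot 1_\alpha,s)$ does, with the same value at $t=s$, namely $C_{\alpha,T_\alpha}(s)$ on one side and $\chi(s+T_\alpha\cdot 1_\alpha,s)=C_{\alpha,T_\alpha}(s)$ on the other by $d)$; uniqueness (Proposition \ref{alfa.o3}) then forces equality. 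To make this precise I would verify, using Proposition \ref{alfa.p7}$c)$ and part $a)$ above, that both sides, regarded as functions of $t\geq s$, are solutions of $X(t+1_\gamma)=A_\gamma(t)X(t)$; for the right-hand side this uses $A_\gamma(t)\chi(t,s)=\chi(t+1_\gamma,s)$ directly, and for the left-hand side it uses $A_\gamma(t)\chi(t+T_\alpha\cdot 1_\alpha,s)=A_\gamma(t+T_\alpha\cdot 1_\alpha)\chi(t+T_\alpha\cdot 1_\alpha,s)=\chi(t+T_\alpha\cdot 1_\alpha+1_\gamma,s)$, the first equality being multi-periodicity of $A_\gamma$.

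For part $b)$, I would combine $c)$ with itself in the two orders. Fix $t\in\mathcal{Z}$ and choose any $s\leq t$ with $\chi(t,s)$ convenient — in fact take $s=t$, so $\chi(t,t)=I_n$ by $b)$ of Proposition \ref{alfa.p7}. Applying $d)$ of Proposition \ref{alfa.p7} and part $e)$ of the same proposition with $k=T_\alpha$, $p=T_\beta$ gives $C_{\alpha,T_\alpha}(t+T_\beta\cdot 1_\beta)\,C_{\beta,T_\beta}(t)=C_{\beta,T_\beta}(t+T_\alpha\cdot 1_\alpha)\,C_{\alpha,T_\alpha}(t)$; now apply part $a)$ of the present proposition to both shifted factors, $C_{\alpha,T_\alpha}(t+T_\beta\cdot 1_\beta)=C_{\alpha,T_\alpha}(t)$ and $C_{\beta,T_\beta}(t+T_\alpha\cdot 1_\alpha)=C_{\beta,T_\beta}(t)$, and the commutation relation $C_{\alpha,T_\alpha}(t)C_{\beta,T_\beta}(t)=C_{\beta,T_\beta}(t)C_{\alpha,T_\alpha}(t)$ drops out.

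The main obstacle is organizational rather than deep: getting the side (left versus right multiplication) and the evaluation point right in part $c)$. The temptation is to quote Proposition \ref{alfa.p7}$c)$ directly, but that produces $C_{\alpha,T_\alpha}(t)$ on the left, not $C_{\alpha,T_\alpha}(s)$ on the right; bridging this gap is exactly where multi-periodicity (part $a)$) is indispensable, and the cleanest justification is the uniqueness argument sketched above rather than a bare manipulation of indices. Once $c)$ is in hand, parts $a)$ and $b)$ are short formal consequences of Proposition \ref{alfa.p7}$d)$, $e)$ and the defining product (\ref{Cdef}).
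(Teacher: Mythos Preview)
Your proposal is correct and follows essentially the same route as the paper: part $a)$ directly from the product formula (\ref{Cdef}), part $b)$ by specializing Proposition~\ref{alfa.p7}$e)$ to $k=T_\alpha$, $p=T_\beta$ and then invoking $a)$ to drop the shifts, and part $c)$ via the uniqueness argument (both $t\mapsto\chi(t+T_\alpha\cdot 1_\alpha,s)$ and $t\mapsto\chi(t,s)C_{\alpha,T_\alpha}(s)$ solve the matrix recurrence (\ref{ecalfao3.1}) and agree at $t=s$ by Proposition~\ref{alfa.p7}$d)$). One small slip: in your sketch of $c)$ you twice write ``$s\mapsto$'' where you mean ``$t\mapsto$'', since $s$ is fixed and $t$ is the running variable of the recurrence.
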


\begin{proof}
$a)$ It follows from the definition of $C_{\alpha,\, k}(\cdot)$;
this is either the constant function $I_n$,
or a product of multiperiodic matrix functions.

$b)$ According the step $a)$, we have
\begin{equation*}
C_{\alpha,\, T_{\alpha}}(t+T_{\beta} \cdot 1_{\beta} )
=
C_{\alpha,\, T_{\alpha}}(t),
\quad
C_{\beta,\, T_{\beta}}(t+T_{\alpha} \cdot 1_{\alpha})
=
C_{\beta,\, T_{\beta}}(t).
\end{equation*}

According the Proposition \ref{alfa.p7}, $e)$, we have
\begin{equation*}
C_{\alpha,\, T_{\alpha}}(t+T_{\beta} \cdot 1_{\beta})
C_{\beta,\, T_{\beta}}(t)
=
C_{\beta,\, T_{\beta}}(t+T_{\alpha} \cdot 1_\alpha)
C_{\alpha,\, T_{\alpha}}(t).
\end{equation*}
It follows that $C_{\alpha,\, T_{\alpha}}(t)
C_{\beta,\, T_{\beta}}(t)
=
C_{\beta,\, T_{\beta}}(t)
C_{\alpha,\, T_{\alpha}}(t)$.

$c)$ Fix $\alpha$. Fix $s \in \mathcal{Z}$.
Let $Y_1,Y_2  \colon
\big\{ t \in \mathcal{Z} \, \big|\, t \geq s \big\} \to \mathcal{M}_n(K)$,
$$
Y_1(t)=\chi(t+T_{\alpha} \cdot 1_\alpha , s),
\quad
Y_2(t)=\chi(t,s) \cdot
C_{\alpha,\, T_{\alpha}}(s),
\quad
\forall t \geq s.
$$

For each $\beta \in \{1,2, \ldots, m\}$, we have
$$
Y_1(t+1_{\beta})=
\chi(t+T_{\alpha} \cdot 1_\alpha +1_{\beta}, s)
=
A_{\beta}(t+T_{\alpha} \cdot 1_\alpha )\chi(t+T_{\alpha} \cdot 1_\alpha , s)
=A_{\beta}(t)Y_1(t);
$$
$$
Y_2(t+1_{\beta})=
\chi(t+1_{\beta},s) \cdot
C_{\alpha,\, T_{\alpha}}(s)
=
A_{\beta}(t)\chi(t,s) \cdot
C_{\alpha,\, T_{\alpha}}(s)
=A_{\beta}(t)Y_2(t).
$$
Consequently the functions
$Y_1(\cdot)$ and $Y_2(\cdot)$
are both solutions of the recurrence
{\rm (\ref{ecalfao3.1})}.

According the Proposition \ref{alfa.p7}, $d)$, we have
$\chi(s+T_{\alpha}\cdot 1_\alpha , s)=
C_{\alpha,\, T_{\alpha}}(s)$,
which is equivalent to
$\chi(s+T_{\alpha}\cdot 1_\alpha , s)=
\chi(s,s)C_{\alpha,\, T_{\alpha}}(s)$,
i.e., $Y_1(s)=Y_2(s)$.
From the uniqueness property
(Proposition {\rm \ref{alfa.o3}})
it follows that $Y_1(\cdot)$ and $Y_2(\cdot)$
coincide; hence
$\chi(t+T_{\alpha} \cdot 1_\alpha , s)=\chi(t,s) \cdot
C_{\alpha,\, T_{\alpha}}(s)$, $\forall t\geq s$.
\end{proof}

It proves without difficulty the following result.
\begin{lemma}
\label{alfa.l4}
Let $A,B \in \mathcal{M}_2(\mathbb{C})$,
such that $A \neq \lambda I_2$,
$\forall \lambda \in \mathbb{C}$.
Then
$$
AB=BA\,\,
\mbox{if and only if there exist}\,\, z, w \in \mathbb{C},\,
\mbox{such that}\,\, B=zI_2+wA.$$
\end{lemma}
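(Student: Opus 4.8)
The plan is to prove the nontrivial implication by producing a cyclic vector for $A$ and using it to determine $B$ completely. The reverse implication is immediate: if $B=zI_2+wA$ then $AB=zA+wA^2=BA$, so only the direct implication needs an argument.

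First I would show that, because $A\neq\lambda I_2$ for every $\lambda\in\mathbb{C}$, there is a vector $v\in\mathbb{C}^2$ with $\{v,Av\}$ linearly independent (hence a basis of $\mathbb{C}^2$). If no such $v$ existed, then $Av$ would be a scalar multiple of $v$ for every nonzero $v$; taking two linearly independent vectors $u,w$ with $Au=\lambda u$, $Aw=\mu w$ and applying $A$ to $u+w$ forces $\lambda=\mu$, so every vector would be an eigenvector of $A$ for one and the same eigenvalue, making $A$ scalar — a contradiction. Fix such a $v$.

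Next, assuming $AB=BA$, I would expand $Bv$ in the basis $\{v,Av\}$, writing $Bv=zv+wAv=(zI_2+wA)v$ for suitable $z,w\in\mathbb{C}$, and then check that $B$ and $zI_2+wA$ agree on all of $\mathbb{C}^2$. On $v$ this is the chosen expansion. On $Av$ we compute, using $BA=AB$, that $B(Av)=A(Bv)=A(zv+wAv)=zAv+wA^2v=(zI_2+wA)(Av)$. Since the two matrices coincide on the basis $\{v,Av\}$, they are equal, giving $B=zI_2+wA$.

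The one slightly delicate point is the existence of the cyclic vector $v$; the rest is a routine verification on a basis. The underlying fact there — that a $2\times2$ matrix every nonzero vector of which is an eigenvector must be scalar — is elementary, and it is exactly the place where the hypothesis $A\neq\lambda I_2$ is used. (Equivalently, one could note that a non-scalar $2\times2$ matrix has minimal polynomial of degree $2$, so its centralizer is an algebra containing the linearly independent set $\{I_2,A\}$ and of dimension $2$, hence equals $\mathrm{span}\{I_2,A\}$; the cyclic-vector argument above merely makes this explicit without invoking dimension counts for centralizers.)
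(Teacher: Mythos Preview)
Your proof is correct and complete. The paper does not actually supply a proof of this lemma: it merely remarks that ``it proves without difficulty'' and then states the result. Your cyclic-vector argument is a clean way to fill in what the authors left implicit, and the alternative sketch you mention at the end (via the dimension of the centralizer of a non-derogatory matrix) is equally valid. There is nothing to compare against in the paper beyond the claim that the result is easy, and your argument confirms that it is.
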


\begin{proposition}
\label{alfa.p10}
Suppose that the matrices
$P_1, P_2, \ldots, P_m \in \mathcal{M}_{n}(\mathbb{C})$
commute, i.e.,
$P_{\alpha}P_{\beta}=P_{\beta}P_{\alpha}$,
$\forall \alpha, \beta \in \{1,2, \ldots, m\}$.

Let us assume that one of the statements
$i)$ or $ii)$ below is true.

$i)$ For each $\alpha \in \{1,2, \ldots, m\}$,
the matrix $P_{\alpha}$ is diagonalizable.

$ii)$ $n=2$ and for any $\alpha \in \{1,2, \ldots, m\}$,
the matrix $P_{\alpha}$ is invertible.

Let $k_1, k_2, \ldots, k_m  \in \mathbb{N}^*$.
Then, for any $\alpha  \in \{1,2, \ldots, m\}$,
there exists $Q_{\alpha} \in \mathcal{M}_{n}(\mathbb{C})$
such that

\vspace{0.1 cm}
$a)$ $Q_{\alpha}^{k_{\alpha}}=P_{\alpha}$,\,\,
$\forall \alpha  \in \{1,2, \ldots, m\}$,

\vspace{0.1 cm}
$b)$ $Q_{\alpha}Q_{\beta}=Q_{\beta}Q_{\alpha}$,\,\,
$\forall \alpha, \beta \in \{1,2, \ldots, m\}$.
\end{proposition}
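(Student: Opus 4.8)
The plan is to split along the two hypotheses and, in each case, to produce the roots $Q_\alpha$ inside a single commutative subalgebra of $\mathcal{M}_n(\mathbb{C})$, so that property $b)$ is automatic and only $a)$ needs work.

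Under hypothesis $i)$ I would first invoke the classical fact that a commuting family of diagonalizable matrices is simultaneously diagonalizable: there is an invertible $S$ with $S^{-1}P_\alpha S = D_\alpha$ diagonal for every $\alpha$. (One proves this by decomposing $\mathbb{C}^n$ into the eigenspaces of $P_1$, observing that each is invariant under the remaining $P_\beta$ and that a diagonalizable operator restricts to a diagonalizable operator on an invariant subspace, then inducting on $m$ or on $n$.) For each $\alpha$ let $E_\alpha$ be a diagonal matrix whose $j$-th diagonal entry is a fixed complex $k_\alpha$-th root of the $j$-th diagonal entry of $D_\alpha$ — such a root always exists in $\mathbb{C}$, including the root $0$ of $0$, so no invertibility is needed here — and set $Q_\alpha = S E_\alpha S^{-1}$. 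Then $Q_\alpha^{k_\alpha}=S E_\alpha^{k_\alpha} S^{-1}=S D_\alpha S^{-1}=P_\alpha$, and $Q_\alpha Q_\beta = Q_\beta Q_\alpha$ because the diagonal matrices $E_\alpha$ and $E_\beta$ commute.

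Under hypothesis $ii)$ the real issue is to force the roots to commute, and this is the step I expect to be the main obstacle. If every $P_\alpha$ is a scalar matrix $\lambda_\alpha I_2$ (necessarily $\lambda_\alpha\neq 0$), simply take $Q_\alpha=\mu_\alpha I_2$ with $\mu_\alpha^{k_\alpha}=\lambda_\alpha$; this degenerate subcase must be dealt with separately because Lemma \ref{alfa.l4} does not apply to it. Otherwise fix $\gamma$ with $P_\gamma$ not a scalar matrix. By Lemma \ref{alfa.l4}, for each $\alpha$ there are $z_\alpha,w_\alpha\in\mathbb{C}$ with $P_\alpha=z_\alpha I_2+w_\alpha P_\gamma$, so all the $P_\alpha$ lie in the two-dimensional commutative algebra $\mathcal{A}=\{\,zI_2+wP_\gamma\mid z,w\in\mathbb{C}\,\}$. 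It then suffices to show that every invertible $M\in\mathcal{A}$ has a $k$-th root in $\mathcal{A}$ for each $k\geq 1$: conjugating $P_\gamma$ to canonical form by some invertible $R$, the non-scalar invertible matrix $R^{-1}P_\gamma R$ is either $\operatorname{diag}(\lambda_1,\lambda_2)$ with $\lambda_1\neq\lambda_2$, or $\begin{pmatrix}\lambda&1\\0&\lambda\end{pmatrix}$ with $\lambda\neq 0$, and correspondingly $R^{-1}\mathcal{A}R$ is the algebra of all diagonal $2\times 2$ matrices, respectively of all upper-triangular Toeplitz matrices $\begin{pmatrix}a&b\\0&a\end{pmatrix}$. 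In the first case $\operatorname{diag}(a_1,a_2)$ with $a_1a_2\neq 0$ has the $k$-th root $\operatorname{diag}(a_1^{1/k},a_2^{1/k})$; in the second, for $a\neq 0$ one checks directly (using $N^2=0$ for the nilpotent part) that $\begin{pmatrix}a^{1/k}&\,b/(k\,a^{(k-1)/k})\,\\0&a^{1/k}\end{pmatrix}^{k}=\begin{pmatrix}a&b\\0&a\end{pmatrix}$. Each such root lies in the respective algebra, so conjugating back by $R$ gives a $k$-th root of $M$ inside $\mathcal{A}$.

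Finally, applying this to each $P_\alpha$ — which is invertible, hence so is its image in $R^{-1}\mathcal{A}R$ — with $k=k_\alpha$ yields $Q_\alpha\in\mathcal{A}$ with $Q_\alpha^{k_\alpha}=P_\alpha$, and since $\mathcal{A}$ is commutative we get $Q_\alpha Q_\beta=Q_\beta Q_\alpha$ for free. Thus the only genuinely delicate points are the reduction via Lemma \ref{alfa.l4} (with the separate all-scalar case) and the verification that the constructed roots stay inside $\mathcal{A}$; the existence of scalar $k$-th roots and the $2\times 2$ computations are routine.
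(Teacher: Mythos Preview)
Your argument is correct and follows essentially the same route as the paper: simultaneous diagonalization for hypothesis $i)$, and for hypothesis $ii)$ the reduction via Lemma~\ref{alfa.l4} to the commutative algebra generated by a single non-scalar $P_\gamma$, followed by an explicit $k$-th root construction in the Jordan-block case using $N^2=0$. The only organizational difference is that the paper reduces the sub-cases ``all $P_\alpha$ scalar'' and ``$P_\gamma$ diagonalizable'' back to hypothesis $i)$ rather than handling them directly inside $\mathcal{A}$ as you do; your framing in terms of closure of $\mathcal{A}$ under $k$-th roots is slightly more uniform, but the computations are the same.
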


\begin{proof}
For $d_1,d_2, \ldots, d_n \in \mathbb{C}$,
we denote $\mathrm{diag}(d_1,d_2, \ldots, d_n)$,
the matrix from $\mathcal{M}_{n}(\mathbb{C})$, with main diagonal elements
$d_1,d_2, \ldots, d_n$ (in this order) and in rest $0$.

If Hypothesis $i)$ is true and if
$P_{\alpha}P_{\beta}=P_{\beta}P_{\alpha}$,
$\forall \alpha, \beta$, then the matrices $P_{\alpha}$
are simultaneously diagonalizable, i.e. there exist $T \in \mathcal{M}_{n}(\mathbb{C})$,
$T$ invertible and there exist
$\lambda_{\alpha,1}$, $\lambda_{\alpha,2}$,
$\ldots$ , $\lambda_{\alpha,n}$$\in \mathbb{C}$
such that
\begin{equation*}
P_{\alpha}=T\cdot
\mathrm{diag}
(\lambda_{\alpha,1},\lambda_{\alpha,2}, \ldots, \lambda_{\alpha,n})
\cdot T^{-1},
\quad
\forall \alpha \in \{1,2, \ldots, m\}.
\end{equation*}
There exist $\theta_{\alpha,j} \in \mathbb{C}$,
such that
$\displaystyle \theta_{\alpha,j}^{k_{\alpha}}=\lambda_{\alpha,j}$,
$\forall \alpha \in \{1,2, \ldots, m\}$,
$\forall j\in \{1,2, \ldots, n\}$.

For $Q_{\alpha}=T\cdot
\mathrm{diag}
(\theta_{\alpha,1},\theta_{\alpha,2}, \ldots, \theta_{\alpha,n})
\cdot T^{-1}$, $\alpha \in \{1,2, \ldots, m\}$,
easily finds that statements $a)$ and $b)$ hold.

Suppose that the hypothesis $ ii) $ is true.

If $\forall \alpha$, $\exists \lambda_{\alpha} \in \mathbb{C}$,
such that $P_{\alpha}=\lambda_{\alpha}I_2$,
then the matrices $P_{\alpha}$ are diagonalizable
and hence the hypothesis $i)$ is true,
and this case has already been treated.

We left to study the case in which at least one of the matrices
$P_{\alpha}$ is not of the form $P_{\alpha}=\lambda_{\alpha}I_2$,
with $\lambda_{\alpha} \in \mathbb{C}$. After a possible
renumbering we can assume that this matrix is $P_1$.
Hence $P_1 \neq \lambda I_2$, $\forall \lambda \in \mathbb{C}$.

Since $P_1P_{\alpha}=P_{\alpha}P_1$, from the Lemma \ref{alfa.l4}
follows that there exist $z_{\alpha}, w_{\alpha} \in \mathbb{C}$,
such that $P_{\alpha}=z_{\alpha}I_2+ w_{\alpha}P_1$
($\forall \alpha \in \{1,2, \ldots, m\}$).

If $P_1$ is diagonalizable, then
the matrix $z_{\alpha}I_2+ w_{\alpha}P_1$ is also diagonalizable,
So assuming $ i) $ is satisfied,
and this case has already been treated.

If the matrix $P_1$ is not diagonalizable, then
there exists a matrix $T \in \mathcal{M}_{2}(\mathbb{C})$,
$T$ invertible and there exists $\lambda \in \mathbb{C}$,
such that
$\displaystyle P_1=
T
\left(
\begin{array}{cc}
\lambda & 1 \\
0 & \lambda \\
\end{array}
\right)
T^{-1}
$.

Consequently, we have $P_1=\lambda I_2 +S$, where
$\displaystyle S=
T
\left(
\begin{array}{cc}
0 & 1 \\
0 & 0 \\
\end{array}
\right)
T^{-1}
$.

We observe that
$P_{\alpha}=(z_{\alpha}+\lambda w_{\alpha}) I_2+ w_{\alpha}S$,
$\forall \alpha \in \{1,2, \ldots, m\}$.
Since $S^2=O_2$, we deduce that the matrix $S$ is not invertible;
hence $z_{\alpha}+\lambda w_{\alpha} \neq 0$. It follows that there
exists $u_{\alpha} \in \mathbb{C}$, $u_{\alpha} \neq 0$,
such that
$\displaystyle u_{\alpha}^{k_{\alpha}}=
z_{\alpha}+\lambda w_{\alpha}$.
Let $\displaystyle v_{\alpha}=
\frac{w_{\alpha}}{k_{\alpha} u_{\alpha}^{k_{\alpha}-1}}$.

We choose the matrix $Q_{\alpha}=u_{\alpha}I_2+v_{\alpha}S$.
Since $S^j=O_2$, $\forall j\geq 2$, we obtain
\begin{equation*}
Q_{\alpha}^{k_{\alpha}}
=u_{\alpha}^{k_{\alpha}}I_2+
k_{\alpha}u_{\alpha}^{k_{\alpha}-1}v_{\alpha}S
=
(z_{\alpha}+\lambda w_{\alpha})I_2+w_{\alpha}S
=P_{\alpha},
\,\,
\forall \alpha \in \{1, \ldots, m\}.
\end{equation*}

The equality $Q_{\alpha}Q_{\beta}=Q_{\beta}Q_{\alpha}$
is obvious.
\end{proof}

\begin{proposition}
\label{alfa.p12}
Let $t_0 \in \mathbb{Z}^m$, fixed.
We denote
$\mathcal{Z}:= \big\{ t \in \mathbb{Z}^m \, \big|\,
t \geq t_0 \big\}$.

Let $T=(T_1,T_2,\ldots, T_m) \in \mathbb{N}^m$, $T \neq 0$.

Consider the matrix functions
$A_\alpha \colon \mathcal{Z} \to \mathcal{M}_{n}(\mathbb{C})$,
$\alpha \in \{1,2, \ldots, m\}$, for which the relations {\rm (\ref{ecalfat5.1})} are satisfied.
Suppose that, $\forall \alpha,\beta \in \{1,2, \ldots, m\}$,
$\forall t \in \mathcal{Z}$,
$A_\alpha(t+T_{\beta} \cdot 1_{\beta})=A_\alpha(t)$
and $A_\alpha(t)$ is invertible.

We denote $\Phi(t):=\chi(t,t_0)$, $t \in \mathcal{Z}$,
and $\widetilde{C}_{\alpha}=C_{\alpha,\, T_{\alpha}}(t_0)$.

Let $F_1= \big\{ \alpha \in \{1, \ldots, m\} \, \big|\,
T_{\alpha}\geq1 \big\}$,
$F_2= \big\{ \alpha \in \{1, \ldots, m\} \, \big|\,
T_{\alpha}=0 \big\}$.

For each $\alpha \in F_1$, we choose $B_{\alpha}\in \mathcal{M}_{n}(\mathbb{C})$
such that $B_{\alpha}^{T_{\alpha}}=\widetilde{C}_{\alpha}$.

For each $\alpha \in F_2$, we choose $B_{\alpha}=I_n$.

If for any
$\alpha,\beta \in F_1$,
we have $B_{\alpha}B_{\beta}=B_{\beta}B_{\alpha}$,
then there exists a function,
$P \colon \mathcal{Z} \to \mathcal{M}_{n}(\mathbb{C})$,
such that

\vspace{0.1 cm}
$P(t+T_{\alpha} \cdot 1_{\alpha})=P(t)$,
\quad $\forall t\in \mathcal{Z}$,\,
$\forall \alpha \in \{1,2, \ldots, m\}$, and

\vspace{0.1 cm}
$\Phi(t)=P(t)B_1^{t^1}B_2^{t^2}\cdot \ldots \cdot B_m^{t^m}$,
\quad $\forall t \geq t_0$.
\end{proposition}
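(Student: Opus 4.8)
The plan is to mimic the classical Floquet construction: define $P(t)$ by $P(t):=\Phi(t)\,B_1^{-t^1}B_2^{-t^2}\cdots B_m^{-t^m}$ (which makes the factorization $\Phi(t)=P(t)B_1^{t^1}\cdots B_m^{t^m}$ hold tautologically, since the $B_\alpha$ commute) and then check that $P$ is multi-periodic, i.e. $P(t+T_\alpha\cdot 1_\alpha)=P(t)$ for every $\alpha$. The content is entirely in this periodicity check, split into the two cases $\alpha\in F_2$ and $\alpha\in F_1$.

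For $\alpha\in F_2$ we have $T_\alpha=0$, so $t+T_\alpha\cdot 1_\alpha=t$ and there is nothing to prove. For $\alpha\in F_1$, I would compute
\begin{equation*}
P(t+T_\alpha\cdot 1_\alpha)=\Phi(t+T_\alpha\cdot 1_\alpha)\,B_1^{-t^1}\cdots B_\alpha^{-(t^\alpha+T_\alpha)}\cdots B_m^{-t^m}.
\end{equation*}
By Proposition \ref{alfa.p9}$c)$ (applicable because the $A_\alpha$ are multi-periodic and commute in the sense of (\ref{ecalfat5.1})), $\Phi(t+T_\alpha\cdot 1_\alpha)=\chi(t+T_\alpha\cdot 1_\alpha,t_0)=\chi(t,t_0)\,C_{\alpha,\,T_\alpha}(t_0)=\Phi(t)\,\widetilde C_\alpha$. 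Substituting and using $\widetilde C_\alpha=B_\alpha^{T_\alpha}$ together with the commutativity of the $B_\beta$ (so that $B_\alpha^{T_\alpha}$ can be moved past the other factors and combined with $B_\alpha^{-(t^\alpha+T_\alpha)}$ to give $B_\alpha^{-t^\alpha}$), everything collapses to $\Phi(t)\,B_1^{-t^1}\cdots B_m^{-t^m}=P(t)$. One should also note at the outset that each $\widetilde C_\alpha=C_{\alpha,\,T_\alpha}(t_0)$ is invertible by Proposition \ref{alfa.p7}$h)$ (since each $A_\alpha(t)$ is invertible), so the chosen $B_\alpha$ are invertible and negative powers $B_\alpha^{-t^\alpha}$ make sense; note also that $t^\alpha$ may exceed $t_0^\alpha$ only by nonnegative amounts but the exponents $B_\alpha^{t^\alpha}$ are literal integer powers, which is fine since $B_\alpha$ is invertible.

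The main (and really only) obstacle is bookkeeping: one must be careful that the hypothesis ``$B_\alpha B_\beta=B_\beta B_\alpha$'' is assumed only for $\alpha,\beta\in F_1$, but for $\alpha\in F_2$ we set $B_\alpha=I_n$, which commutes with everything, so in fact all the $B_\alpha$, $\alpha\in\{1,\dots,m\}$, pairwise commute — this is what legitimizes freely reordering the product $B_1^{t^1}\cdots B_m^{t^m}$ and the cancellation step. I expect no genuine difficulty beyond making these commutations explicit; the substantive input, namely the ``quasi-periodicity'' relation $\Phi(t+T_\alpha\cdot 1_\alpha)=\Phi(t)\widetilde C_\alpha$, is already supplied by Proposition \ref{alfa.p9}$c)$, and the existence of commuting $T_\alpha$-th roots $B_\alpha$ of the commuting matrices $\widetilde C_\alpha$ is exactly what Proposition \ref{alfa.p10} guarantees under either of its hypotheses (this is presumably why that proposition precedes this one), so the statement's own hypothesis ``if the $B_\alpha$ commute'' is automatically satisfiable in the relevant cases.
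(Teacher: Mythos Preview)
Your proposal is correct and follows essentially the same approach as the paper's own proof: define $P(t)=\Phi(t)B_1^{-t^1}\cdots B_m^{-t^m}$, note that all the $B_\alpha$ commute (extending the hypothesis from $F_1$ to all indices since $B_\alpha=I_n$ for $\alpha\in F_2$) and are invertible, and verify multi-periodicity of $P$ via Proposition~\ref{alfa.p9}$c)$. The paper's proof is organized slightly differently (it records the invertibility and full commutativity up front rather than as a concluding remark), but the mathematical content is identical.
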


\begin{proof}
We remark that for any
$ \alpha,\beta \in \{1,2, \ldots, m\}$,
we have $B_{\alpha}B_{\beta}=B_{\beta}B_{\alpha}$.

If $\alpha \in F_2$, i.e. $T_{\alpha}=0$, then
$\widetilde{C}_{\alpha}=I_n$.
We observe that the equality
$B_{\alpha}^{T_{\alpha}}=\widetilde{C}_{\alpha}$
is true. Hence, for any $\alpha \in \{1,2, \ldots, m\}$,
we have
$\widetilde{C}_{\alpha}=B_{\alpha}^{T_{\alpha}}$.

Let $\alpha \in F_1$, i.e. $T_{\alpha}\geq 1$.
Since the matrix $\widetilde{C}_{\alpha}$ is invertible (Proposition~\ref{alfa.p7}),
from $B_{\alpha}^{T_{\alpha}}=\widetilde{C}_{\alpha}$
it follows that the matrix $B_{\alpha}$ is invertible.

Hence, for any $\alpha \in \{1,2, \ldots, m\}$,
the matrix $B_{\alpha}$ is invertible.

We define the function
$P \colon \mathcal{Z} \to \mathcal{M}_{n}(\mathbb{C})$,
\begin{equation}
\label{ecalfap12.1}
P(t)=\Phi(t)B_1^{-t^1}B_2^{-t^2}\cdot \ldots \cdot B_m^{-t^m},
\quad
\forall t \geq t_0.
\end{equation}
Since from the formula {\rm (\ref{ecalfap12.1})}
it follows immediately the equality
$$
\Phi(t)=P(t)B_1^{t^1}B_2^{t^2}\cdot \ldots \cdot B_m^{t^m},
$$
it is sufficient to show also that $P(\cdot)$ is a multiperiodic function:
\begin{equation*}
P(t+T_{\alpha} \cdot 1_{\alpha})
=
\chi (t+T_{\alpha} \cdot 1_{\alpha} , t_0  )
B_1^{-t^1}B_2^{-t^2}\cdot \ldots \cdot
B_{\alpha}^{-t^{\alpha}-T_{\alpha}}
\cdot \ldots
\cdot B_m^{-t^m}.
\end{equation*}
According the Proposition \ref{alfa.p9}, $c)$, we have
$$
\chi(t+T_{\alpha} \cdot 1_\alpha , t_0)=
\chi(t,t_0) \cdot
C_{\alpha,\, T_{\alpha}}(t_0)
=\Phi(t)\widetilde{C}_{\alpha}
=\Phi(t)B_{\alpha}^{T_{\alpha}}.
$$

We obtain
\begin{equation*}
P(t+T_{\alpha} \cdot 1_{\alpha})
=
\Phi(t)B_{\alpha}^{T_{\alpha}}
B_1^{-t^1}B_2^{-t^2}\cdot \ldots \cdot
B_{\alpha}^{-t^{\alpha}-T_{\alpha}}
\cdot \ldots
\cdot B_m^{-t^m}=
\end{equation*}
\begin{equation*}
=
\Phi(t)
B_1^{-t^1}B_2^{-t^2}\cdot \ldots \cdot
B_{\alpha}^{-t^{\alpha}}
\cdot \ldots
\cdot B_m^{-t^m}=P(t).
\end{equation*}
\end{proof}

\begin{theorem}
\label{alfa.t7}
Let $t_0 \in \mathbb{Z}^m$, fixed. We denote
$\mathcal{Z}:= \big\{ t \in \mathbb{Z}^m \, \big|\, 
t \geq t_0 \big\}$.

Let $T=(T_1,T_2,\ldots, T_m) \in \mathbb{N}^m$, $T \neq 0$.

Consider the matrix functions
$A_\alpha \colon \mathcal{Z} \to \mathcal{M}_{n}(\mathbb{C})$,
$\alpha \in \{1,2, \ldots, m\}$,
for which the relations {\rm (\ref{ecalfat5.1})} are satisfied.
Suppose that, $\forall \alpha,\beta \in \{1,2, \ldots, m\}$,
$\forall t \in \mathcal{Z}$,
$A_\alpha(t+T_{\beta} \cdot 1_{\beta})=A_\alpha(t)$
and $A_\alpha(t)$ is invertible.

We denote $\Phi(t):=\chi(t,t_0)$, $t \in \mathcal{Z}$.

Let us assume that one of the statements
$i)$ and $ ii) $ below is true.

$i)$ $n=2$.

$ii)$ $\forall \alpha \in \{1,2, \ldots, m\}$,
$\forall t \in \mathcal{Z}$, the matrix $A_{\alpha}(t)$ is Hermitian and
\begin{equation*}
A_{\alpha}(t)A_{\alpha}(t+k\cdot 1_\alpha)
=
A_{\alpha}(t+k\cdot 1_\alpha)A_{\alpha}(t),
\,\,
\forall t \in \mathcal{Z},\,
\forall k \in \mathbb{N},\,
\forall \alpha \in \{1, \ldots, m\}.
\end{equation*}

Then there exists a function,
$P \colon \mathcal{Z} \to \mathcal{M}_{n}(\mathbb{C})$,
and also there exist the constant invertible matrices
$B_1$, $B_2$, \ldots, $B_m$ $\in \mathcal{M}_{n}(\mathbb{C})$,
such that

\vspace{0.1 cm}
$a)$ $P(t+T_{\alpha} \cdot 1_{\alpha})=P(t)$,
\quad $\forall t\in \mathcal{Z}$,\,
$\forall \alpha \in \{1,2, \ldots, m\}$;

\vspace{0.1 cm}
$b)$ $B_{\alpha}B_{\beta}=B_{\beta}B_{\alpha}$,
\quad $\forall \alpha,\beta \in \{1,2, \ldots, m\}$;

\vspace{0.1 cm}
$c)$ $\Phi(t)=P(t)B_1^{t^1}B_2^{t^2}\cdot \ldots \cdot B_m^{t^m}$,
\quad $\forall t \geq t_0$.
\end{theorem}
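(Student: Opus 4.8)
The strategy is to reduce Theorem~\ref{alfa.t7} to Proposition~\ref{alfa.p12} by verifying, under either hypothesis $i)$ or $ii)$, that one can extract commuting $T_\alpha$-th roots $B_\alpha$ of the matrices $\widetilde{C}_\alpha=C_{\alpha,\,T_\alpha}(t_0)$. Once such $B_\alpha$ are produced (with $B_\alpha=I_n$ for $\alpha\in F_2$), Proposition~\ref{alfa.p12} immediately yields the function $P(\cdot)$ and the factorization in $c)$, while $b)$ and the invertibility of the $B_\alpha$ are already guaranteed there. So the real content is the existence of the roots, and this is where Proposition~\ref{alfa.p10} enters.

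First I would set $P_\alpha=\widetilde{C}_\alpha=C_{\alpha,\,T_\alpha}(t_0)$ for $\alpha\in F_1$ and check the hypotheses of Proposition~\ref{alfa.p10}. Commutativity of the $P_\alpha$ for $\alpha,\beta\in F_1$ follows from Proposition~\ref{alfa.p9}$b)$ (applied at $t=t_0$, using multi-periodicity of the $A_\alpha$). Invertibility of each $P_\alpha$ follows from Proposition~\ref{alfa.p7}$h)$, since every $A_\alpha(t)$ is assumed invertible. Under hypothesis $i)$, $n=2$, so condition $ii)$ of Proposition~\ref{alfa.p10} is met directly, and with the exponents $k_\alpha:=T_\alpha\in\mathbb{N}^*$ (for $\alpha\in F_1$) we obtain commuting matrices $Q_\alpha$ with $Q_\alpha^{T_\alpha}=P_\alpha$; set $B_\alpha:=Q_\alpha$ for $\alpha\in F_1$ and $B_\alpha:=I_n$ for $\alpha\in F_2$. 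Since $B_\alpha=I_n$ commutes with everything, the full commutativity $B_\alpha B_\beta=B_\beta B_\alpha$ holds for all $\alpha,\beta$, so the hypothesis of Proposition~\ref{alfa.p12} is satisfied and the theorem follows in case $i)$.

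For hypothesis $ii)$ the extra work is to show that each $P_\alpha=C_{\alpha,\,T_\alpha}(t_0)=\prod_{j=1}^{T_\alpha}A_\alpha\big(t_0+(T_\alpha-j)\cdot1_\alpha\big)$ is diagonalizable, so that condition $i)$ of Proposition~\ref{alfa.p10} applies (with the same exponents $k_\alpha=T_\alpha$). The point is that this is a product of Hermitian matrices $A_\alpha(t_0),A_\alpha(t_0+1_\alpha),\dots,A_\alpha(t_0+(T_\alpha-1)1_\alpha)$ which, by the assumed commutation relation $A_\alpha(t)A_\alpha(t+k\cdot1_\alpha)=A_\alpha(t+k\cdot1_\alpha)A_\alpha(t)$, pairwise commute. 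A commuting family of Hermitian matrices is simultaneously unitarily diagonalizable, hence their product $P_\alpha$ is a Hermitian (indeed normal) matrix and in particular diagonalizable. Then Proposition~\ref{alfa.p10}$i)$ gives commuting roots $Q_\alpha$ with $Q_\alpha^{T_\alpha}=P_\alpha$, and we finish exactly as before: put $B_\alpha=Q_\alpha$ on $F_1$, $B_\alpha=I_n$ on $F_2$, invoke Proposition~\ref{alfa.p12}.

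The main obstacle is the diagonalizability claim in case $ii)$: one must be careful that the relevant matrices are precisely the factors appearing in $C_{\alpha,\,T_\alpha}(t_0)$ and that the stated commutation hypothesis indeed makes them a mutually commuting family (it suffices to note that any two factors $A_\alpha(t_0+i\cdot1_\alpha)$ and $A_\alpha(t_0+j\cdot1_\alpha)$ with $i<j$ commute by taking $t=t_0+i\cdot1_\alpha$, $k=j-i$ in the hypothesis). After that, simultaneous diagonalization of commuting Hermitian matrices is standard, and everything else is a direct appeal to the earlier propositions; no genuinely new computation is needed.
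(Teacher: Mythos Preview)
Your proposal is correct and follows essentially the same route as the paper's proof: verify the hypotheses of Proposition~\ref{alfa.p10} for the family $\{\widetilde{C}_\alpha:\alpha\in F_1\}$ (commutativity via Proposition~\ref{alfa.p9}$b)$, invertibility via Proposition~\ref{alfa.p7}), extract commuting roots $B_\alpha$, set $B_\alpha=I_n$ on $F_2$, and conclude by Proposition~\ref{alfa.p12}. Your treatment of case $ii)$ is in fact slightly more detailed than the paper's, which simply asserts that the $\widetilde{C}_\alpha$ are Hermitian; you correctly supply the reason, namely that the factors $A_\alpha(t_0+j\cdot 1_\alpha)$ are Hermitian and pairwise commute, so their product is again Hermitian and hence diagonalizable.
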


\begin{proof} Let
$\widetilde{C}_{\alpha}=C_{\alpha,\, T_{\alpha}}(t_0)$.
The matrices $\widetilde{C}_{\alpha}$
are invertible (Proposition \ref{alfa.p7}).

One observes that in the hypothesis $ii)$, the matrices
$\widetilde{C}_{\alpha}$ are Hermitian, hence diagonalizable.

Let $F_1= \big\{ \alpha \in \{1, \ldots, m\} \, \big|\,
T_{\alpha}\geq1 \big\}$,
$F_2= \big\{ \alpha \in \{1, \ldots, m\} \, \big|\,
T_{\alpha}=0 \big\}$.

For each $\alpha \in F_2$, i.e. $T_{\alpha}=0$,
we choose $B_{\alpha}=I_n$.

For the set of invertible matrices
$\big\{ \widetilde{C}_{\alpha}  \, \big|\, \alpha \in F_1 \big\}$,
we apply the Proposition~\ref{alfa.p10}.
The Proposition~\ref{alfa.p9} says that
$\widetilde{C}_{\alpha}\widetilde{C}_{\beta}
=\widetilde{C}_{\beta}\widetilde{C}_{\alpha}$.
Note that either $ n = 2 $ or the matrices
$\widetilde{C}_{\alpha}$ are diagonalizable. Hence the hypotheses of the
Proposition~\ref{alfa.p10} are true.
Hence, for each $\alpha \in F_1$, there exists
$B_{\alpha}\in \mathcal{M}_{n}(\mathbb{C})$
such that $B_{\alpha}^{T_{\alpha}}=\widetilde{C}_{\alpha}$
and
\begin{equation*}
B_{\alpha}B_{\beta}=B_{\beta}B_{\alpha},
\quad
\forall \alpha,\beta \in F_1.
\end{equation*}
From $B_{\alpha}^{T_{\alpha}}=\widetilde{C}_{\alpha}$
and $T_{\alpha}\geq 1$, it follows that the matrix
$T_{\alpha}$ is invertible.

Since for $\alpha \in F_2$, we have $B_{\alpha}=I_n$,
it follows
\begin{equation*}
B_{\alpha}B_{\beta}=B_{\beta}B_{\alpha},
\quad
\forall \alpha,\beta \in \{1,2, \ldots, m\}.
\end{equation*}

We observe that the hypotheses of the Proposition \ref{alfa.p12}
are true. Consequently, it follows automatically the points $a)$ and $c)$.
\end{proof}

\begin{theorem}
\label{alfa.t8}
Let $t_0 \in \mathbb{Z}^m$, fixed.
We denote
$\mathcal{Z}:= \big\{ t \in \mathbb{Z}^m \, \big|\, 
t \geq t_0 \big\}$.

Let $T=(T_1,T_2,\ldots, T_m) \in \mathbb{N}^m$, $T \neq 0$.

Consider the matrix functions
$A_\alpha \colon \mathcal{Z} \to \mathcal{M}_{n}(\mathbb{C})$,
$\alpha \in \{1,2, \ldots, m\}$, for which the relations {\rm (\ref{ecalfat5.1})} are satisfied.
Suppose that, $\forall \alpha \in \{1,2, \ldots, m\}$,
$A_\alpha(t)$ is invertible, $\forall t\in \mathcal{Z}$.
We denote $\Phi(t):=\chi(t,t_0)$, $t \in \mathcal{Z}$.

We assume that there exists a function $P \colon \mathcal{Z} \to \mathcal{M}_{n}(\mathbb{C})$
and there exist the constant invertible matrices
$B_1$, $B_2$, \ldots, $B_m$ $\in \mathcal{M}_{n}(\mathbb{C})$,
such that the relations from the steps $a)$, $b)$, $c)$ of Theorem {\rm \ref{alfa.t7}},
to be satisfied.

We consider the recurrences
\begin{equation}
\label{ecalfat8.1}
x(t+1_{\alpha}) = A_{\alpha}(t)x(t),
\quad
\forall t\geq t_0,
\,\,
\forall \alpha \in \{1,2, \ldots, m\};
\end{equation}
\begin{equation}
\label{ecalfat8.2}
y(t+1_{\alpha}) = B_{\alpha} y(t),
\quad
\forall t\geq t_0,
\,\,
\forall \alpha \in \{1,2, \ldots, m\}.
\end{equation}

If $y(t)$ is a solution of the recurrence
{\rm (\ref{ecalfat8.2})}, then
$x(t):=P(t)y(t)$ is a solution of the recurrence
{\rm (\ref{ecalfat8.1})}. And conversely, if
$x(t)$ is a solution of the recurrence
{\rm (\ref{ecalfat8.1})}, then
$y(t):=P(t)^{-1}x(t)$ is a solution of the recurrence
{\rm (\ref{ecalfat8.2})}.
\end{theorem}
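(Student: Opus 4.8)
The plan is to extract a single algebraic identity linking $A_\alpha(t)$, $P(t)$, $P(t+1_\alpha)$ and $B_\alpha$, and then to settle both implications by a one-line substitution. Throughout I would abbreviate $W(t):=B_1^{t^1}B_2^{t^2}\cdots B_m^{t^m}$, so that hypothesis $c)$ of Theorem~\ref{alfa.t7} reads $\Phi(t)=P(t)W(t)$ for all $t\ge t_0$.

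First I would record two preliminaries. Since every $A_\alpha(t)$ is invertible, part $g)$ of Proposition~\ref{alfa.p7} shows that $\Phi(t)=\chi(t,t_0)$ is invertible for all $t\ge t_0$; as the $B_\alpha$ are invertible, $W(t)$ is invertible, hence $P(t)=\Phi(t)W(t)^{-1}$ is invertible for every $t\ge t_0$ — this is precisely what makes the substitution $y(t):=P(t)^{-1}x(t)$ legitimate in the converse. Moreover, by the very definition of $\Phi(\cdot)=\chi(\cdot,t_0)$ as the matrix solution of (\ref{ecalfao3.1}) taking the value $I_n$ at $t_0$ (Proposition~\ref{alfa.o3}), we have $\Phi(t+1_\alpha)=A_\alpha(t)\Phi(t)$ for all $t\ge t_0$ and all $\alpha$.

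Next I would derive the key identity. Plugging $\Phi=PW$ into $\Phi(t+1_\alpha)=A_\alpha(t)\Phi(t)$ yields $P(t+1_\alpha)W(t+1_\alpha)=A_\alpha(t)P(t)W(t)$. Because the $B_\beta$ pairwise commute (hypothesis $b)$), the single extra factor produced by passing from $t$ to $t+1_\alpha$ can be pulled to the front, i.e. $W(t+1_\alpha)=B_\alpha W(t)$; cancelling the invertible matrix $W(t)$ on the right then leaves
\begin{equation*}
A_\alpha(t)P(t)=P(t+1_\alpha)B_\alpha,\qquad\forall\,t\ge t_0,\ \forall\,\alpha\in\{1,2,\ldots,m\}.
\end{equation*}
I expect this to be the only step where something genuinely has to be verified: one must check that $B_\alpha$ really does commute past $B_1^{t^1},\ldots,B_m^{t^m}$ and that $W(t)$ is invertible so it may be cancelled; everything after this is purely formal.

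Finally I would conclude. For the direct implication, if $y$ solves (\ref{ecalfat8.2}) and $x(t):=P(t)y(t)$, then, using $y(t+1_\alpha)=B_\alpha y(t)$ together with the key identity,
\begin{equation*}
x(t+1_\alpha)=P(t+1_\alpha)\,B_\alpha\,y(t)=A_\alpha(t)\,P(t)\,y(t)=A_\alpha(t)\,x(t),
\end{equation*}
so $x$ solves (\ref{ecalfat8.1}). For the converse, if $x$ solves (\ref{ecalfat8.1}) and $y(t):=P(t)^{-1}x(t)$, then rewriting the key identity as $P(t+1_\alpha)^{-1}A_\alpha(t)=B_\alpha P(t)^{-1}$ gives
\begin{equation*}
y(t+1_\alpha)=P(t+1_\alpha)^{-1}A_\alpha(t)\,x(t)=B_\alpha\,P(t)^{-1}x(t)=B_\alpha\,y(t),
\end{equation*}
so $y$ solves (\ref{ecalfat8.2}). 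It is worth noting that the multiperiodicity of $P$ from hypothesis $a)$ is not used anywhere; only the factorization $\Phi=PW$, the commutativity of the $B_\alpha$, and the invertibility of $P(t)$ enter the argument.
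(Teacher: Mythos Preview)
Your proof is correct and follows essentially the same approach as the paper: both arguments rest on the invertibility of $P(t)$ (obtained from $\Phi(t)=P(t)W(t)$ with $\Phi(t)$ and $W(t)$ invertible), the recurrence $\Phi(t+1_\alpha)=A_\alpha(t)\Phi(t)$, and the commutativity of the $B_\alpha$. The only difference is presentational---you isolate the identity $A_\alpha(t)P(t)=P(t+1_\alpha)B_\alpha$ first and then apply it to both directions, whereas the paper embeds that same identity inside a chain of equivalences; your observation that the multi-periodicity hypothesis $a)$ is never used is also correct.
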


\begin{proof}
Since the matrices $B_{\alpha}$ commute,
it follows that the recurrence
{\rm (\ref{ecalfat8.2})} has
the existence and uniqueness property of solutions
(see Theorem \ref{alfa.t5}).

The matrix $\Phi(t)$ is invertible
(Proposition \ref{alfa.p7}).
From the equality of the point $b)$ in the Theorem {\rm \ref{alfa.t7}},
it follows that the matrix $P(t)$ is invertible.

Let $y(t)$ be a solution of the recurrence
{\rm (\ref{ecalfat8.2})} and $x(t):=P(t)y(t)$; hence
$y(t):=P(t)^{-1}x(t)$.
$$
y(t+1_{\alpha}) = B_{\alpha} y(t)
\Longleftrightarrow
P(t+1_{\alpha})^{-1}x(t+1_{\alpha})
=B_{\alpha}P(t)^{-1}x(t)
$$
$$
\Longleftrightarrow
x(t+1_{\alpha})
=P(t+1_{\alpha})B_{\alpha}P(t)^{-1}x(t).
$$
$$
\Longleftrightarrow
x(t+1_{\alpha})
=
\Phi(t+1_{\alpha})
B_1^{-t^1}B_2^{-t^2}\cdot \ldots \cdot
B_{\alpha}^{-t^{\alpha}-1}
\cdot \ldots
\cdot B_m^{-t^m}
\cdot
B_{\alpha}P(t)^{-1}x(t)
$$
$$
\Longleftrightarrow
x(t+1_{\alpha})
=
A_{\alpha}(t)
\Phi(t)
B_1^{-t^1}B_2^{-t^2}\cdot \ldots \cdot
B_{\alpha}^{-t^{\alpha}}
\cdot \ldots
\cdot B_m^{-t^m}
\cdot
P(t)^{-1}x(t)
$$
$$
\Longleftrightarrow
x(t+1_{\alpha})
=
A_{\alpha}(t)
P(t)
\cdot
P(t)^{-1}x(t)
\Longleftrightarrow
x(t+1_{\alpha})
=
A_{\alpha}(t)x(t).
$$

Like it proves the converse.
\end{proof}

\noindent
{\bf Conjecture 1.} {\it Suppose that the invertible matrices
$P_1, P_2, \ldots, P_m \in \mathcal{M}_{n}(\mathbb{C})$
commute, i.e.,
$P_{\alpha}P_{\beta}=P_{\beta}P_{\alpha}$,
$\forall \alpha, \beta \in \{1,2, \ldots, m\}$.

Let $k_1, k_2, \ldots, k_m  \in \mathbb{N}^*$.
Then, for any $\alpha  \in \{1,2, \ldots, m\}$,
there exists $Q_{\alpha} \in \mathcal{M}_{n}(\mathbb{C})$
such that

\vspace{0.1 cm}
$a)$ $Q_{\alpha}^{k_{\alpha}}=P_{\alpha}$,\,\,
$\forall \alpha  \in \{1,2, \ldots, m\}$,

\vspace{0.1 cm}
$b)$ $Q_{\alpha}Q_{\beta}=Q_{\beta}Q_{\alpha}$,\,\,
$\forall \alpha, \beta \in \{1,2, \ldots, m\}$.
}

\vspace{0.2 cm}
\noindent
{\bf Conjecture 2.} {\it
Let $t_0 \in \mathbb{Z}^m$, fixed.
We denote
$\mathcal{Z}:= \big\{ t \in \mathbb{Z}^m \, \big|\, 
t \geq t_0 \big\}$.

Let $T=(T_1,T_2,\ldots, T_m) \in \mathbb{N}^m$, $T \neq 0$.

Consider the matrix functions
$A_\alpha \colon \mathcal{Z} \to \mathcal{M}_{n}(\mathbb{C})$,
$\alpha \in \{1,2, \ldots, m\}$, for which the relations {\rm (\ref{ecalfat5.1})} are satisfied.
Suppose that, $\forall \alpha,\beta \in \{1,2, \ldots, m\}$,
$\forall t \in \mathcal{Z}$,
$A_\alpha(t+T_{\beta} \cdot 1_{\beta})=A_\alpha(t)$
and $A_\alpha(t)$ is invertible.

We denote $\Phi(t):=\chi(t,t_0)$, $t \in \mathcal{Z}$.

Then there exists a function,
$P \colon \mathcal{Z} \to \mathcal{M}_{n}(\mathbb{C})$,
and also there exist the constant invertible matrices
$B_1$, $B_2$, \ldots, $B_m$ $\in \mathcal{M}_{n}(\mathbb{C})$,
such that

\vspace{0.1 cm}
$a)$ $P(t+T_{\alpha} \cdot 1_{\alpha})=P(t)$,
\quad $\forall t\in \mathcal{Z}$,\,
$\forall \alpha \in \{1,2, \ldots, m\}$;

\vspace{0.1 cm}
$b)$ $B_{\alpha}B_{\beta}=B_{\beta}B_{\alpha}$,
\quad $\forall \alpha,\beta \in \{1,2, \ldots, m\}$;

\vspace{0.1 cm}
$c)$ $\Phi(t)=P(t)B_1^{t^1}B_2^{t^2}\cdot \ldots \cdot B_m^{t^m}$,
\quad $\forall t \geq t_0$.
}

\begin{remark}
\label{alfa.o4}
From the Conjecture $1$ it follows the Conjecture $2$.

The proof is the same as that of Theorem~{\rm \ref{alfa.t7}}.
One uses the Conjecture {\rm 1} instead of the Proposition~{\rm \ref{alfa.p10}}
to show that the hypotheses of the Proposition~{\rm \ref{alfa.p12}}
are satisfied. Then one applies Proposition~{\rm \ref{alfa.p12}}.
\end{remark}

\subsection{Example and commentaries}

\begin{example}
\label{alfa.exmp4}
Let us consider $m=2$, $n \geq 2$, $T_2 \in \mathbb{Z}$, $T_2 \geq 2$,
\begin{equation*}
Q_1=
\begin{pmatrix}
\cos \displaystyle \frac{\pi}{T_2} & - \sin \displaystyle  \frac{\pi}{T_2}
\vspace{0.1 cm} \\
\sin \displaystyle \frac{\pi}{T_2} & \cos \displaystyle  \frac{\pi}{T_2}
\end{pmatrix},
\quad
S_1=
\begin{pmatrix}
1 &  0 \\
0  & -1
\end{pmatrix},
\end{equation*}
\begin{equation*}
Q=
\begin{pmatrix}
Q_1 &  O_{2,n-2} \\
O_{n-2,2}  & I_{n-2}
\end{pmatrix},
\quad
S=
\begin{pmatrix}
S_1 &  O_{2,n-2} \\
O_{n-2,2}  & I_{n-2}
\end{pmatrix},
\quad  \mbox{dac\u a}\,\,\,
n\geq 3.
\end{equation*}
If $n=2$, we set $Q=Q_1$ and $S=S_1$.
\end{example}

We have $Q_1^{-1}=Q_1^{\top}$, $Q^{-1}=Q^{\top}$,
$S_1^{-1}=S_1=S_1^{\top}$, $S^{-1}=S=S^{\top}$,
and hence the matrices $Q_1$, $Q$ are orthogonal and
$S_1$, $S$ are symmetric and orthogonal.

Powers: $Q_1^{T_2}=-I_2$, $Q_1^{2T_2}=I_2$,
$Q^{T_2}=\begin{pmatrix}
-I_2 &  O_{2,n-2} \\
O_{n-2,2}  & I_{n-2}
\end{pmatrix}$, $Q^{2T_2}=I_n$.

Also, we have $S_1Q_1=Q_1^{-1}S_1$ and $SQ=Q^{-1}S$.
Hence $Q^{-1}=SQS$ and $Q^{-k}=SQ^kS$
or $SQ^{-k}=Q^kS$,
$\forall k \in \mathbb{ Z}$.

If $SQ=QS$, then
$Q^{-1}=Q$$\Longrightarrow$ $Q^2=I_n$
$\Longrightarrow$
$\cos \displaystyle  \frac{2\pi}{T_2}=1$
and
$\sin \displaystyle  \frac{2\pi}{T_2}=0$,
what can not because $T_2 \geq 2$.
Hence $SQ \neq QS$.

Let us consider the matrices
$$A_1, A_2 \colon \mathbb{Z}^2 \to \mathcal{M}_n(\mathbb{C}),$$
\begin{equation*}
A_1(t^1,t^2)=Q^{t^2}SQ^{-t^2}=Q^{2t^2}S,
\quad
A_2(t^1,t^2)=Q,
\quad \forall (t^1,t^2) \in \mathbb{Z}^2.
\end{equation*}

Since the matrix $S$ is symmetric and the matrix $Q$ is orthogonal
it follows that the matrix $A_1(t^1,t^2)$ is hermitian
(it's really real and symmetric). The matrix $A_1(t^1,t^2)$ is also orthogonal.

Unfortunately the matrix $A_2(t^1,t^2)$ is not hermitian (symmetric).
This is the only hypothesis of Theorem {\rm \ref{alfa.t7}}
which does not take.

Let us compute $A_1$:
$$Q^{2t^2}S=\begin{pmatrix}
Q_1^{2t^2}S_1 &  O_{2,n-2} \\
O_{n-2,2}  & I_{n-2}
\end{pmatrix},\,\,\,
Q_1^{2t^2}=
\begin{pmatrix}
\cos \displaystyle \frac{2 \pi t^2}{T_2} & - \sin \displaystyle \frac{2 \pi t^2}{T_2}
\vspace{0.1 cm} \\
\sin \displaystyle \frac{2 \pi t^2}{T_2} & \cos \displaystyle \frac{2 \pi t^2}{T_2}
\end{pmatrix};
$$

\begin{equation*}
A_1(t^1,t^2)=
\begin{pmatrix}
Q_1^{2t^2}S_1 &  O_{2,n-2} \\
O_{n-2,2}  & I_{n-2}
\end{pmatrix},
\,\, \mbox{with}\,\,\,\,
Q_1^{2t^2}S_1
=
\begin{pmatrix}
\cos \displaystyle \frac{2 \pi t^2}{T_2} &  \sin \displaystyle \frac{2 \pi t^2}{T_2}
\vspace{0.1 cm} \\
\sin \displaystyle \frac{2 \pi t^2}{T_2} & - \cos \displaystyle \frac{2 \pi t^2}{T_2}
\end{pmatrix};
\end{equation*}

\begin{equation*}
A_1(t^1,t^2+1)A_2(t^1,t^2)=Q^{t^2+1}SQ^{-t^2-1}Q
=Q^{t^2+1}SQ^{-t^2}
\end{equation*}
\begin{equation*}
A_2(t^1+1,t^2)A_1(t^1,t^2)=QQ^{t^2}SQ^{-t^2}
=Q^{t^2+1}SQ^{-t^2}.
\end{equation*}
Hence $A_1(t^1,t^2+1)A_2(t^1,t^2)=A_2(t^1+1,t^2)A_1(t^1,t^2)$,
i.e., the relation {\rm (\ref{ecalfat5.1})} is true.

Obviously that
$A_1(t^1,t^2)A_1(t^1+k,t^2)
=A_1(t^1+k,t^2)A_1(t^1,t^2)$,
$\forall k \in \mathbb{N}$,
$\forall (t^1,t^2) \in \mathbb{Z}^2$,
since $A_1(\cdot,\cdot)$ is constant with respect to the first argument.
We have also
$A_2(t^1,t^2)A_2(t^1,t^2+k)
=A_2(t^1,t^2+k)A_2(t^1,t^2)$,
$\forall k \in \mathbb{N}$,
$\forall (t^1,t^2) \in \mathbb{Z}^2$.

Now, obviously that
$A_1(t^1+1,t^2)=A_1(t^1,t^2)$, $\forall (t^1,t^2) \in \mathbb{Z}^2$.
$A_1(t^1,t^2+T_2)=Q^{2t^2+2T_2}S=Q^{2t^2}S$, since $Q^{2T_2}=I_n$.
Hence $A_1(t^1,t^2+T_2)=A_1(t^1,t^2)$, $\forall (t^1,t^2) \in \mathbb{Z}^2$.

Since $A_2(\cdot, \cdot)$ is a constant function, it follows that
$$A_2(t^1+1,t^2)=A_2(t^1,t^2),\,A_2(t^1,t^2+T_2)=A_2(t^1,t^2), \,\,\forall (t^1,t^2) \in \mathbb{Z}^2.$$

Was also observed that
$A_1(t^1,t^2)A_2(t^1,t^2)\neq A_2(t^1,t^2)A_1(t^1,t^2)$,
$\forall (t^1,t^2)$,
because if we had equality would result
$Q^{t^2}SQ^{-t^2}Q=QQ^{t^2}SQ^{-t^2}$,
equivalent to $SQ=QS$, what is false
(we proved above that $SQ\neq QS$).

According Theorem {\rm \ref{alfa.t5}},
for any $t_0=(t_0^1,t_0^2) \in \mathbb{Z}^2$
and any $x_0\in \mathbb{C}^n$,
there exists a unique solution
$x \colon \mathbb{Z}^2 \to \mathbb{C}^n$
of the double recurrence
\begin{equation}
\label{ecalfaexm4.1}
\begin{cases}
x(t^1+1,t^2)=A_1(t^1,t^2)x(t^1,t^2)\\
x(t^1,t^2+1)=A_2(t^1,t^2)x(t^1,t^2),
\end{cases}\!\!\!\!\!\!
\quad \forall (t^1,t^2) \in \mathbb{Z}^2,
\end{equation}
which verifies the initial condition $x(t_0)=x_0$.
Let us show that this solution can be written in the form
\begin{equation}
\label{ecalfaexm4.2}
x(t^1,t^2)=Q^{t^2}S^{t^1-t_0^1}Q^{-t_0^2}x_0,
\quad \forall (t^1,t^2) \in \mathbb{Z}^2.
\end{equation}
Indeed,
\begin{equation*}
x(t^1+1,t^2)=Q^{t^2}S^{t^1+1-t_0^1}Q^{-t_0^2}x_0,
\end{equation*}
\begin{equation*}
A_1(t^1,t^2)x(t^1,t^2)
=Q^{t^2}SQ^{-t^2}Q^{t^2}S^{t^1-t_0^1}Q^{-t_0^2}x_0
=Q^{t^2}S^{1+t^1-t_0^1}Q^{-t_0^2}x_0.
\end{equation*}
\begin{equation*}
x(t^1,t^2+1)=Q^{t^2+1}S^{t^1-t_0^1}Q^{-t_0^2}x_0
=QQ^{t^2}S^{t^1-t_0^1}Q^{-t_0^2}x_0
=A_2(t^1,t^2)x(t^1,t^2)
\end{equation*}
and the equality $x(t_0^1,t_0^2)=x_0$ is obvious.

It follows that the fundamental matrix is $\chi(t,t_0)=Q^{t^2}S^{t^1-t_0^1}Q^{-t_0^2}$.
We select $t_0=(0,0)$; let $\Phi(t^1,t^2):=\chi \big((t^1,t^2);(0,0) \big)=Q^{t^2}S^{t^1}$.

We shall determine the matrices $B_1$, $B_2$ and $P(\cdot)$
as in Proposition {\rm \ref{alfa.p12}}, with $T_1=1$.
\begin{equation*}
\widetilde{C}_1=C_{1,\, T_1}(t_0)
=C_{1,\, 1}(0,0)=A_1(0,0)=S,
\end{equation*}
\begin{equation*}
\widetilde{C}_2
=
C_{2,\, T_2}(0,0)=Q^{T_2}
=
\begin{pmatrix}
-I_2 &  O_{2,n-2} \\
O_{n-2,2}  & I_{n-2}
\end{pmatrix}.
\end{equation*}

We look for the matrices $B_1, B_2\in \mathcal{M}_{n}(\mathbb{C})$ that satisfy
$B_1^{T_1}=\widetilde{C}_1$, $B_2^{T_2}=\widetilde{C}_2$,
i.e.,
$B_1=S$, and $B_2^{T_2}=
\begin{pmatrix}
-I_2 &  O_{2,n-2} \\
O_{n-2,2}  & I_{n-2}
\end{pmatrix}$.
Since there exists a complex number $z \in \mathbb{C}$ such that
$z^{T_2}=-1$, we select $B_2=
\begin{pmatrix}
zI_2 &  O_{2,n-2} \\
O_{n-2,2}  & I_{n-2}
\end{pmatrix}$
and conclude that
$B_2^{T_2}=
\begin{pmatrix}
-I_2 &  O_{2,n-2} \\
O_{n-2,2}  & I_{n-2}
\end{pmatrix}$.

We have also $B_1B_2=B_2B_1$ (i.e., $SB_2=B_2S$), since
$B_1$ and $B_2$ are diagonal matrices.
We see that the assumptions of Proposition {\rm \ref{alfa.p12}}
are satisfied (with $T_1=1)$.

The relation $\Phi(t^1,t^2)=P(t^1,t^2)B_1^{t^1}B_2^{t^2}$
is equivalent to
$$
Q^{t^2}S^{t^1}=P(t^1,t^2)B_1^{t^1}B_2^{t^2}
\Longleftrightarrow
Q^{t^2}S^{t^1}=P(t^1,t^2)B_2^{t^2}S^{t^1}
\Longleftrightarrow
P(t^1,t^2)=Q^{t^2}B_2^{-t^2}
$$
\begin{equation*}
P(t^1,t^2)=
\begin{pmatrix}
Q_1^{t^2} &  O_{2,n-2} \\
O_{n-2,2}  & I_{n-2}
\end{pmatrix}
\begin{pmatrix}
z^{-t^2}I_2 &  O_{2,n-2} \\
O_{n-2,2}  & I_{n-2}
\end{pmatrix}
=
\begin{pmatrix}
z^{-t^2}Q_1^{t^2} &  O_{2,n-2} \\
O_{n-2,2}  & I_{n-2}
\end{pmatrix}
\end{equation*}
\begin{equation*}
P(t^1+T_1,t^2)=P(t^1+1,t^2)
=Q^{t^2}B_2^{-t^2}=P(t^1,t^2)
\end{equation*}
\begin{equation*}
P(t^1,t^2+T_2)=
\begin{pmatrix}
z^{-t^2-T_2}Q_1^{t^2+T_2} &  O_{2,n-2} \\
O_{n-2,2}  & I_{n-2}
\end{pmatrix}.
\end{equation*}
Since $z^{T_2}=-1$ and $Q_1^{T_2}=-I_2$, it follows
\begin{equation*}
P(t^1,t^2+T_2)=
\begin{pmatrix}
z^{-t^2}Q_1^{t^2} &  O_{2,n-2} \\
O_{n-2,2}  & I_{n-2}
\end{pmatrix}
=
P(t^1,t^2).
\end{equation*}

We verified that $P(t^1+T_1,t^2)=P(t^1,t^2)$,
$P(t^1,t^2+T_2)=P(t^1,t^2)$.

Let's check also the conclusion of Theorem {\rm \ref{alfa.t8}}.

According the Proposition \ref{alfa.p8},
any solution of the recurrence {\rm (\ref{ecalfat8.2})}
is of the form
$y(t^1,t^2)=B_1^{t_1}B_2^{t_2}v$,
with $v\in \mathbb{C}^n$.
On the other hand, we have
$$P(t^1,t^2)y(t^1,t^2)=Q^{t^2}B_2^{-t^2}B_1^{t_1}B_2^{t_2}v=Q^{t^2}B_1^{t_1}v$$ and we observe that
$x(t^1,t^2):=P(t^1,t^2)y(t^1,t^2)$
is indeed a solution of the recurrence {\rm (\ref{ecalfaexm4.1})},
which verifies the condition $x(0,0)=v$, according {\rm (\ref{ecalfaexm4.2})}.

\subsection{Case of periodic coefficients}

\begin{definition}
Let $T \in \mathbb{N}^m$, $T \neq 0$.
The function $f \colon \mathcal{Z} \to M$
is called periodic of period $T$
if $f(t+T)=f(t)$, $\forall t \in \mathcal{Z}$.
\end{definition}

\begin{proposition}
\label{alfa.p13}
Let $T=(T^1,T^2,\ldots, T^m)\in \mathbb{N}^m$, $T \neq 0$.
We consider the matrix functions
$A_\alpha \colon \mathcal{Z} \to \mathcal{M}_{n}(K)$,
$\alpha \in \{1,2, \ldots, m\}$, for which the relations {\rm (\ref{ecalfat5.1})} are satisfied.
Suppose that, $\forall \alpha \in \{1,2, \ldots, m\}$,
the matrix function $A_\alpha(\cdot)$ is periodic of period $T$.
Let $C(t)=\chi(t+T , t)$.
Then

\vspace{0.1 cm}
\noindent
$a)$ $C_{\alpha,\, k}(\cdot)$ is periodic of period $T$;

\vspace{0.1 cm}
\noindent
$b)$ $\chi(t+T , s)=
\chi(t,s) \cdot
C(s)  $,\,\,
$\forall t,s \in  \mathcal{Z}$, with $t\geq s$;

\vspace{0.1 cm}
\noindent
$c)$ $C(s)=
C_{1,\, T^1}(s^1,s^2+T^2,\ldots, s^m+T^m)
C_{2,\, T^2}(s^1,s^2,s^3+T^3,\ldots,s^m+T^m)
\cdot
\ldots
\cdot \cdot
C_{m-1,\, T^{m-1}}(s^1,s^2,\ldots, s^{m-1},s^m+T^m)
C_{m,\, T^m}
(s^1,s^2,\ldots, s^{m-1}, s^m)
$,\,\,
$\forall s \in  \mathcal{Z}$.
\end{proposition}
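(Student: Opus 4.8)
The plan is to follow the pattern of the proof of Proposition~\ref{alfa.p9}, with the single period vector $T$ playing the role previously played by the partial periods $T_\beta\cdot 1_\beta$, and then to obtain part~$c)$ by specializing the explicit product formula of Proposition~\ref{alfa.p7}~$f)$. For $a)$ I would argue straight from the definition \eqref{Cdef}: when $k=0$ the function $C_{\alpha,0}(\cdot)=I_n$ is constant, so periodicity is immediate; when $k\geq 1$, each factor $A_\alpha\bigl(t+(k-j)\cdot 1_\alpha\bigr)$ of the ordered product defining $C_{\alpha,k}(t)$ satisfies $A_\alpha\bigl((t+T)+(k-j)\cdot 1_\alpha\bigr)=A_\alpha\bigl(t+(k-j)\cdot 1_\alpha\bigr)$ because $T$ is a period of $A_\alpha$, so the whole ordered product is unchanged under $t\mapsto t+T$.

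For $b)$ I would fix $s\in\mathcal{Z}$ and introduce the two matrix functions $Y_1(t)=\chi(t+T,s)$ and $Y_2(t)=\chi(t,s)\,C(s)$ on $\{t\in\mathcal{Z}\mid t\geq s\}$; both are well defined since $T\geq 0$ forces $t+T\geq t\geq s$ and $t+T\in\mathcal{Z}$. Using the recurrence satisfied by $\chi(\cdot,s)$ together with the periodicity $A_\beta(t+T)=A_\beta(t)$, one checks for each $\beta$ that $Y_1(t+1_\beta)=A_\beta(t+T)\chi(t+T,s)=A_\beta(t)Y_1(t)$ and $Y_2(t+1_\beta)=A_\beta(t)\chi(t,s)C(s)=A_\beta(t)Y_2(t)$, so $Y_1$ and $Y_2$ are both matrix solutions of \eqref{ecalfao3.1}. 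At the base point, $Y_1(s)=\chi(s+T,s)=C(s)$ by the definition of $C$, while $Y_2(s)=\chi(s,s)C(s)=C(s)$, so $Y_1(s)=Y_2(s)$; the uniqueness statement of Proposition~\ref{alfa.o3} then forces $Y_1\equiv Y_2$, which is precisely $\chi(t+T,s)=\chi(t,s)C(s)$ for all $t\geq s$.

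For $c)$ I would simply substitute $t=s+T$ into the product formula of Proposition~\ref{alfa.p7}~$f)$, so that $t^\alpha-s^\alpha=T^\alpha$ for every $\alpha$ and $t^{\alpha+1}=s^{\alpha+1}+T^{\alpha+1},\ldots,t^m=s^m+T^m$. Reading off the $\alpha$-th factor of that product gives $C_{\alpha,\,T^\alpha}(s^1,\ldots,s^\alpha,s^{\alpha+1}+T^{\alpha+1},\ldots,s^m+T^m)$ for $\alpha\in\{1,\ldots,m-1\}$, and the final factor gives $C_{m,\,T^m}(s^1,\ldots,s^{m-1},s^m)$; since $C(s)=\chi(s+T,s)$, this is exactly the asserted expression.

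I do not anticipate a genuine obstacle: the only points needing care are verifying that the auxiliary functions $Y_1,Y_2$ in $b)$ are defined on the correct domain and really do satisfy \eqref{ecalfao3.1} before invoking uniqueness, and keeping the index bookkeeping straight when substituting $t=s+T$ into the product formula in $c)$.
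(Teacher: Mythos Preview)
Your proposal is correct and mirrors the paper's proof essentially line for line: part~$a)$ is argued from the definition of $C_{\alpha,k}$, part~$b)$ by the very same $Y_1/Y_2$ uniqueness argument, and part~$c)$ by specializing Proposition~\ref{alfa.p7}~$f)$ at $t=s+T$. If anything, you supply slightly more detail in~$c)$ than the paper, which simply cites Proposition~\ref{alfa.p7}~$f)$ without unpacking the substitution.
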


\begin{proof}
$a)$ follows from the definition of $C_{\alpha,\, k}(\cdot)$;
this is either the constant function $I_n$,
or a product of matrix functions, periodic with the period $T$.

$b)$ Fix $s \in \mathcal{Z}$.
Let $Y_1,Y_2  \colon
\big\{ t \in \mathcal{Z} \, \big|\, t \geq s \big\} \to \mathcal{M}_n(K)$,
$$
Y_1(t)=\chi(t+T  , s),
\quad
Y_2(t)=\chi(t,s) \cdot
C(s),
\quad
\forall t \geq s.
$$

For each $\alpha \in \{1,2, \ldots, m\}$, we have
$$
Y_1(t+1_{\alpha})=
\chi(t+T  +1_{\alpha}, s)
=
A_{\alpha}(t+T )\chi(t+T, s)
=A_{\alpha}(t)Y_1(t);
$$
$$
Y_2(t+1_{\alpha})=
\chi(t+1_{\alpha},s) \cdot
C(s)
=
A_{\alpha}(t)\chi(t,s) \cdot
C(s)
=A_{\alpha}(t)Y_2(t).
$$
It follows that the functions
$Y_1(\cdot)$ and $Y_2(\cdot)$
are both solutions of the recurrence
{\rm (\ref{ecalfao3.1})};
we have also
$\chi(s+T  , s)=
\chi(s,s)C (s)$,
i.e., $Y_1(s)=Y_2(s)$.
From the uniqueness property
(Proposition {\rm \ref{alfa.o3}})
it follows that $Y_1(\cdot)$ and $Y_2(\cdot)$
coincide; hence
$\chi(t+T , s)=\chi(t,s) \cdot
C(s)$, $\forall t\geq s$.

$c)$ follows directly from
Proposition \ref{alfa.p7}, $f)$.
\end{proof}

\begin{theorem}
\label{alfa.t10}
Let $t_0 \in \mathcal{Z}^m$, fixed.
We denote
$\mathcal{Z}:= \big\{ t \in \mathbb{Z}^m \, \big|\,
t \geq t_0 \big\}$.

Let $T=(T^1,T^2,\ldots, T^m) \in \mathbb{N}^m$, $T \neq 0$.

Consider the matrix functions
$A_\alpha \colon \mathcal{Z} \to \mathcal{M}_{n}(\mathcal{C})$,
$\alpha \in \{1,2, \ldots, m\}$, for which the relations {\rm (\ref{ecalfat5.1})} are satisfied.
Suppose that, $\forall \alpha \in \{1,2, \ldots, m\}$,
the function $A_\alpha(\cdot)$ is periodic\u a
de perioad\u a $T$ and $A_\alpha(t)$ is invertible, $\forall t\in \mathcal{Z}$.

We denote $\Phi(t):=\chi(t,t_0)$, $t \in \mathcal{Z}$.

Then there exists a matrix function
$P \colon \mathcal{Z} \to \mathcal{M}_{n}(\mathcal{C})$,
periodic of period $T$,
and also there exist the constant invertible matrix
$B \in \mathcal{M}_n(\mathcal{C})$,
such that

\vspace{0.1 cm}
$\Phi(t)=P(t)B^{|t|}$,
\quad $\forall t \geq t_0$,
where $|t|= t^1+ \ldots + t^m$.
\end{theorem}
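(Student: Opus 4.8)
The plan is to reduce everything to the existence of a single matrix $|T|$-th root. First I would introduce $\widetilde{C} := C(t_0) = \chi(t_0+T,t_0)$ and apply Proposition~\ref{alfa.p13}, point $b)$, with $s = t_0$; since every $t \in \mathcal{Z}$ satisfies $t \geq t_0$, and $t+T \geq t_0$ because each $T^\alpha \geq 0$, this gives
\[
\Phi(t+T) \;=\; \chi(t+T,t_0) \;=\; \chi(t,t_0)\,C(t_0) \;=\; \Phi(t)\,\widetilde{C}, \qquad \forall\, t \in \mathcal{Z}.
\]
Because every $A_\alpha(t)$ is invertible, Proposition~\ref{alfa.p7}, point $g)$, guarantees that $\widetilde{C} = \chi(t_0+T,t_0)$ is invertible.

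Next I would use the classical fact that an invertible matrix over $\mathbb{C}$ has a $k$-th root for every integer $k \geq 1$: writing $\widetilde{C}$ in Jordan form, each block $\lambda I + N$ with $\lambda \neq 0$ and $N$ nilpotent has the root $\lambda^{1/k}\sum_{j \geq 0}\binom{1/k}{j}(\lambda^{-1}N)^{j}$ (a finite sum), for any fixed choice of $\lambda^{1/k}$; equivalently one may set $B = \exp\!\big(\tfrac1k \log\widetilde{C}\big)$. Taking $k := |T| = T^1 + \cdots + T^m$, which is $\geq 1$ since $T \in \mathbb{N}^m$ and $T \neq 0$, I obtain $B \in \mathcal{M}_n(\mathbb{C})$ with $B^{|T|} = \widetilde{C}$; from $\det(B)^{|T|} = \det(\widetilde{C}) \neq 0$ it follows that $B$ is invertible. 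In contrast with the multi-periodic Theorem~\ref{alfa.t7}, no commutativity constraint enters here, since a single constant matrix $B$ does the job.

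Finally I would define $P \colon \mathcal{Z} \to \mathcal{M}_n(\mathbb{C})$ by $P(t) := \Phi(t)\,B^{-|t|}$. The identity $\Phi(t) = P(t)B^{|t|}$ then holds by construction, so it only remains to verify that $P$ is periodic of period $T$. Using $|t+T| = |t| + |T|$, the relation $\Phi(t+T) = \Phi(t)\widetilde{C}$, the equality $\widetilde{C} = B^{|T|}$, and the commutativity of powers of $B$,
\[
P(t+T) \;=\; \Phi(t+T)\,B^{-|t|-|T|} \;=\; \Phi(t)\,\widetilde{C}\,B^{-|T|}\,B^{-|t|} \;=\; \Phi(t)\,B^{-|t|} \;=\; P(t),
\]
for all $t \in \mathcal{Z}$, which completes the argument. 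The only step beyond routine bookkeeping is the construction of $B$, and this is standard; the rest is a direct consequence of Proposition~\ref{alfa.p13}.
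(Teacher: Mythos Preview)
Your proof is correct and follows essentially the same route as the paper: introduce the monodromy matrix $\widetilde{C}=C(t_0)=\chi(t_0+T,t_0)$, take an invertible $|T|$-th root $B$ of it, set $P(t)=\Phi(t)B^{-|t|}$, and use Proposition~\ref{alfa.p13}\,$b)$ to verify $T$-periodicity of $P$. The only cosmetic differences are that the paper derives invertibility of $\widetilde{C}$ via Proposition~\ref{alfa.p13}\,$c)$ and the invertibility of the factors $C_{\alpha,k}(t)$, whereas you invoke Proposition~\ref{alfa.p7}\,$g)$ directly, and you give an explicit construction of the $|T|$-th root where the paper simply asserts its existence.
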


\begin{proof}
The matrices $C_{\alpha,\, k}(t)$ are invertible (Proposition \ref{alfa.p7}).
From the Proposition \ref{alfa.p13}, $c)$, it follows
that the matrices $C(s)$ are invertible.
Hence there exists an invertible matrix
$B \in \mathcal{M}_n(\mathcal{C})$,
such that $\displaystyle B^{|T|}=C(t_0)$, where $|T|= T^1+T^2+\ldots+T^m$.

We define the function
\begin{equation*} P \colon \mathcal{Z} \to \mathcal{M}_{n}(\mathcal{C}),\,\,\,
P(t)=\Phi(t)B^{-(|t|},
\quad
\forall t \geq t_0.
\end{equation*}
It is sufficient to show that $P(\cdot)$
is periodic of period $T$.
\begin{equation*}
P(t+T)
=
\chi (t+T  , t_0  ) B^{-|t+T|}.
\end{equation*}
According the Proposition \ref{alfa.p13}, $b)$, we have
$$
\chi(t+T  , t_0)=
\chi(t,t_0) C(t_0)
=\Phi(t)B^{|T|}.
$$

We obtain
\begin{equation*}
P(t+T)
=
\Phi(t)B^{|T|}
B^{-|t+T|}=
\Phi(t)
B^{-|t|}=P(t).
\end{equation*}
\end{proof}

\begin{theorem}
\label{alfa.t10}
Let $t_0 \in \mathcal{Z}^m$, fixed.
We denote
$\mathcal{Z}:= \big\{ t \in \mathcal{Z}^m \, \big|\, 
t \geq t_0 \big\}$.

Let $T=(T^1,T^2,\ldots, T^m) \in \mathcal{N}^m$, $T \neq 0$.

Consider the matrix functions
$A_\alpha \colon \mathcal{Z} \to \mathcal{M}_{n}(\mathcal{C})$,
$\alpha \in \{1,2, \ldots, m\}$, for which the relations {\rm (\ref{ecalfat5.1})} are satisfied.
Suppose that, $\forall \alpha \in \{1,2, \ldots, m\}$,
$A_\alpha(t)$ is invertible, $\forall t\in \mathcal{Z}$.
We denote $\Phi(t):=\chi(t,t_0)$, $t \in \mathcal{Z}$.

We assume that there exists a matrix function
$P \colon \mathcal{Z} \to \mathcal{M}_{n}(\mathcal{C})$,
periodic of period $T$, and also there exist a constant invertible matrix
$B \in \mathcal{M}_n(\mathcal{C})$,
such that\, $\Phi(t)=P(t)B^{|t|}$,\, $\forall t \geq t_0$.

We consider the recurrences
\begin{equation}
\label{ecalfat10.1}
x(t+1_{\alpha}) = A_{\alpha}(t)x(t),
\quad
\forall t\geq t_0,
\,\,
\forall \alpha \in \{1,2, \ldots, m\};
\end{equation}
\begin{equation}
\label{ecalfat10.2}
y(t+1_{\alpha}) = B y(t),
\quad
\forall t\geq t_0,
\,\,
\forall \alpha \in \{1,2, \ldots, m\}.
\end{equation}

If $y(t)$ is a solution of the recurrence
{\rm (\ref{ecalfat10.2})}, then
$x(t):=P(t)y(t)$ is a solution of the recurrence
{\rm (\ref{ecalfat10.1})}. And conversely, if
$x(t)$ is a solution of the recurrence
{\rm (\ref{ecalfat10.1})}, then
$y(t):=P(t)^{-1}x(t)$ is a solution of the recurrence
{\rm (\ref{ecalfat10.2})}.
\end{theorem}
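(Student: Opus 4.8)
The plan is to mimic almost verbatim the proof of Theorem~\ref{alfa.t8}, since the present statement is simply its specialization to the periodic case: here the single matrix $B$ plays the role of all the $B_\alpha$ simultaneously, and the scalar exponent $|t| = t^1 + \cdots + t^m$ replaces the multi-exponents $t^1,\ldots,t^m$. First I would observe that the recurrence (\ref{ecalfat10.2}) has constant coefficients all equal to $B$; these trivially commute, so the compatibility relations (\ref{ecalfat5.1}) hold for that system and Theorem~\ref{alfa.t5} grants it the existence and uniqueness property of solutions. Next, since $\Phi(t)$ is invertible (Proposition~\ref{alfa.p7}) and $B$ is invertible, the hypothesis $\Phi(t) = P(t)B^{|t|}$ forces $P(t)$ to be invertible, with $P(t) = \Phi(t)B^{-|t|}$ for all $t \geq t_0$.

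The only computation I would actually carry out is a single bookkeeping identity: for every $\alpha \in \{1,\ldots,m\}$ and every $t \geq t_0$,
\begin{equation*}
P(t+1_\alpha) = \Phi(t+1_\alpha)B^{-|t|-1} = A_\alpha(t)\Phi(t)B^{-|t|-1} = A_\alpha(t)P(t)B^{-1},
\end{equation*}
where I use $|t+1_\alpha| = |t|+1$, the fundamental recurrence $\chi(t+1_\alpha,t_0) = A_\alpha(t)\chi(t,t_0)$, and the fact that powers of $B$ commute with one another. Granting this identity, if $y(t)$ solves (\ref{ecalfat10.2}) and $x(t) := P(t)y(t)$, then $x(t+1_\alpha) = P(t+1_\alpha)y(t+1_\alpha) = A_\alpha(t)P(t)B^{-1}\cdot By(t) = A_\alpha(t)P(t)y(t) = A_\alpha(t)x(t)$, so $x$ solves (\ref{ecalfat10.1}). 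Conversely, inverting the identity to $P(t+1_\alpha)^{-1} = BP(t)^{-1}A_\alpha(t)^{-1}$ and setting $y(t) := P(t)^{-1}x(t)$ gives $y(t+1_\alpha) = P(t+1_\alpha)^{-1}A_\alpha(t)x(t) = BP(t)^{-1}x(t) = By(t)$. As in the proof of Theorem~\ref{alfa.t8}, I would package both implications as a single chain of $\Longleftrightarrow$.

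There is essentially no obstacle here. The one point deserving a moment of attention is precisely why a single matrix $B$ suffices in the periodic case: it is because $|t+1_\alpha| = |t|+1$ independently of $\alpha$, so the exponent increments uniformly in every direction and no commutativity hypothesis among distinct matrices $B_\alpha$ is ever needed. Everything else is a direct transcription of the argument already made in the multi-periodic setting.
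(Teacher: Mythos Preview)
Your proposal is correct and follows essentially the same approach as the paper's own proof: both establish invertibility of $P(t)$ from $\Phi(t)=P(t)B^{|t|}$ and then run the chain of equivalences through the identity $P(t+1_\alpha)=\Phi(t+1_\alpha)B^{-|t|-1}=A_\alpha(t)\Phi(t)B^{-|t|-1}=A_\alpha(t)P(t)B^{-1}$. The only cosmetic difference is that you isolate this identity up front before applying it, whereas the paper leaves it embedded in the $\Longleftrightarrow$ chain.
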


\begin{proof}
The matrix $\Phi(t)$ is invertible
(Proposition \ref{alfa.p7}).

From $\Phi(t)=P(t)B^{|t|}$ it follows that the matrix $P(t)$
is invertible.

Let $y(t)$ be a solution of the recurrence
{\rm (\ref{ecalfat10.2})} and $x(t):=P(t)y(t)$; hence
$y(t):=P(t)^{-1}x(t)$.
$$
y(t+1_{\alpha}) = B y(t)
\Longleftrightarrow
P(t+1_{\alpha})^{-1}x(t+1_{\alpha})
=BP(t)^{-1}x(t)
$$
$$
\Longleftrightarrow
x(t+1_{\alpha})
=P(t+1_{\alpha})BP(t)^{-1}x(t).
$$
$$
\Longleftrightarrow
x(t+1_{\alpha})
=
\Phi(t+1_{\alpha})
B^{-(|t|+1)}BP(t)^{-1}x(t)
$$
$$
\Longleftrightarrow
x(t+1_{\alpha})
=
A_{\alpha}(t)\Phi(t)
B^{-(|t|)}P(t)^{-1}x(t)
$$
$$
\Longleftrightarrow
x(t+1_{\alpha})
=
A_{\alpha}(t)
P(t)
\cdot
P(t)^{-1}x(t)
\Longleftrightarrow
x(t+1_{\alpha})
=
A_{\alpha}(t)x(t).
$$

Like it proves the converse.
\end{proof}

\section{Discrete multitime Samuelson-Hicks model}

We assume that $t=(t^1,...,t^m)\in \mathbb{N}^m$ is a discrete multitime. Having in mind the discrete single-time
Samuelson-Hicks model \cite{M},
we introduce a {\it discrete multitime Samuelson-Hicks like model} based on the following
economical elements:
(i) two parameters, the first $\gamma$, called the {\it marginal propensity to consume}, subject to $0<\gamma < 1$, and
the second $\alpha$ as {\it decelerator} if $0<\alpha <1$, {\it keeper} if $\alpha =1$ or {\it accelerator} if $\alpha > 1$;
(ii) the multiple sequence $Y(t)$ means the {\it national income} and is the main endogenous variable, the multiple
sequence $C(t)$ is the {\it consumption}; (iii) we assume that multiple sequences $Y(t)$, $C(t)$ are non-negative.

\subsection{Constant coefficients Samuelson-Hicks model}

We propose a {\it first order discrete multitime constant coefficients Samuelson-Hicks model}
as first order multiple recurrence system
\begin{equation*}
\begin{pmatrix}
  Y(t+ 1_{\beta}) \\
  C(t+ 1_{\beta})
\end{pmatrix}=
\begin{pmatrix}
\gamma+\alpha & -\frac{\alpha}{\gamma} \\
\gamma & 0
\end{pmatrix}
\begin{pmatrix}
Y(t) \\
C(t)
\end{pmatrix},
\quad \forall t\in \mathbb{N}^m,\,\,
\forall \beta \in \{1,2, \ldots, m\},
\end{equation*}
$$
Y(0)= Y_0,\,\, C(0)= C_0;
$$
with $\alpha, \gamma \in \mathbb{C} \setminus \{ 0 \}$.

The matrix of this double recurrence is $A=
\begin{pmatrix}
\gamma+\alpha & -\frac{\alpha}{\gamma} \\
\gamma & 0
\end{pmatrix}$. According the Example Proposition \ref{alfa.p8},
we have the solution
\begin{equation}
\label{HicksCtSol}
\begin{pmatrix}
  Y(t) \\
  C(t)
\end{pmatrix}
=A^{|t|}
\begin{pmatrix}
Y_0 \\
C_0
\end{pmatrix}.
\end{equation}
Let $r_1,r_2$ be the roots of the characteristic polynomial $r^2-(\gamma+\alpha)r+\alpha$ of the matrix $A$.
It proves easily by induction that whether $r_1\neq r_2$, then
\begin{equation}
\label{putereDif}
A^k=\frac{r_2^k-r_1^k}{r_2-r_1}\cdot A
+\frac{r_2r_1^k-r_1r_2^k}{r_2-r_1}\cdot I_2,
\quad \forall k \in \mathbb{N},
\end{equation}
and whether $r_1 = r_2$, then
\begin{equation}
\label{putereEg}
A^k=kr_1^{k-1} A
-(k-1)r_1^k I_2,
\quad \forall k \in \mathbb{N}.
\end{equation}

We set $k=|t|=t^1+t^2+ \ldots +t^m$ in the formula
{\rm (\ref{putereDif})} or {\rm (\ref{putereEg})}
and we obtain the relation {\rm (\ref{HicksCtSol})}
which gives concrete expressions for $Y(t)$, $C(t)$
in function of $r_1$, $r_2$.

\subsection{Multi-periodic coefficients Samuelson-Hicks model}

Let us use the variable parameters
$$\alpha \colon \mathbb{N}^m \to
\mathbb{C},\,\,
\gamma \colon
\displaystyle \bigcup_{\beta=1}^m \big\{ t \in \mathbb{Z}^m \, \big|\,
t \geq -1_{\beta} \big\}
\to \mathbb{C},$$
such that $ \gamma(t) \neq 0$ and
$ \gamma(t) +\alpha (t) \notin \{ 0;1 \}$, $\forall t \in \mathbb{N}^m$.
They define a {\it discrete multitime multiple Samuelson-Hicks model}
\begin{equation*}
\begin{pmatrix}
  Y(t+ 1_{\beta}) \\
  C(t+ 1_{\beta})
\end{pmatrix}
=A_\beta(t)
\begin{pmatrix}
Y(t) \\
C(t)
\end{pmatrix},
\quad \forall t\in \mathbb{N}^m,\,\,
\forall \beta \in \{1,2, \ldots, m\},
\end{equation*}
$$
Y(0)= Y_0,\,\, C(0)= C_0.
$$
The matrix
\begin{equation}
A_\beta(t)
=
\begin{pmatrix}
\gamma(t)+\alpha(t) & -\frac{\alpha(t)}{\gamma(t-1_\beta)} \vspace{0.1 cm}\\
\gamma(t) & 0
\end{pmatrix},
\quad \forall t\in \mathbb{N}^m,\,\,
\forall \beta \in \{1,2, \ldots, m\},
\end{equation}
must satisfy the relations {\rm (\ref{ecalfat5.1})}, i.e.
\begin{equation}
\label{ecHic2}
A_\beta(t+1_\mu)A_\mu(t)
=A_\mu(t+1_\beta)A_\beta(t),
\end{equation}
$$
\forall t\in \mathbb{N}^m,\,\,
\forall \beta,\mu \in \{1,2, \ldots, m\}.
$$
We denote $x(t)=\begin{pmatrix}
Y(t) \\
C(t)
\end{pmatrix}$. The Samuelson-Hicks recurrence writes
\begin{equation}
\label{ecHic3}
x(t+ 1_{\beta})
=A_\beta(t)x(t),
\quad \forall t\in \mathbb{N}^m,\,\,
\forall \beta \in \{1,2, \ldots, m\},
\end{equation}
$$
x(0)= x_0.
$$
The relation {\rm (\ref{ecHic2})} is equivalent to
\begin{equation*}
\begin{pmatrix}
\big( \gamma(t)+\alpha(t) \big)
\big( \gamma(t+1_\mu)+\alpha(t+1_\mu) \big)
-  \frac{\gamma(t)\alpha(t+1_\mu)}{\gamma(t+1_\mu-1_\beta)}
& {\Large *\,} \vspace{0.1 cm}\\
\big( \gamma(t)+\alpha(t) \big)\gamma(t+1_\mu) & {\Large *\,}
\end{pmatrix}
\end{equation*}
\begin{equation*}
=
\begin{pmatrix}
\big( \gamma(t)+\alpha(t) \big)
\big( \gamma(t+1_\beta)+\alpha(t+1_\beta) \big)
-  \frac{\gamma(t)\alpha(t+1_\beta)}{\gamma(t+1_\beta-1_\mu)}
& {\Large *\,} \vspace{0.1 cm}\\
\big( \gamma(t)+\alpha(t) \big)\gamma(t+1_\beta) & {\Large *\,}
\end{pmatrix}.
\end{equation*}
It follows that
$$\gamma(t+1_\mu)=\gamma(t+1_\beta)$$
and
\begin{equation}
\label{ecHic4}
\begin{split}
\big( \gamma(t)+\alpha(t) \big)
\alpha(t+1_\mu)
-  \frac{\gamma(t)\alpha(t+1_\mu)}{\gamma(t+1_\mu-1_\beta)} \\
=
\big( \gamma(t)+\alpha(t) \big)
\alpha(t+1_\beta)
-  \frac{\gamma(t)\alpha(t+1_\beta)}{\gamma(t+1_\beta-1_\mu)}.
\end{split}
\end{equation}

By induction, one obtains
$\gamma(t+k \cdot 1_\mu)=\gamma(t+k \cdot 1_\beta)$,\,\,
$\forall k \in \mathbb{N}$,
$\forall t \in \mathbb{N}^m$, $\forall \mu$, $\forall \beta$ and
\begin{equation*}
\gamma(t)=
\gamma \big( (t^1,t^2, \ldots, t^{m-1},0) +t^m \cdot 1_m \big)
=
\gamma \big( (t^1,t^2, \ldots, t^{m-1},0) +t^m \cdot 1_1 \big)
\end{equation*}
\begin{equation*}
=
\gamma  (t^1+t^m ,t^2, \ldots, t^{m-1},0)
=
\gamma \big( (t^1+t^m ,t^2, \ldots, t^{m-2},0,0) +t^{m-1} \cdot 1_{m-1}\big)
\end{equation*}
\begin{equation*}
=
\gamma \big( (t^1+t^m ,t^2, \ldots, t^{m-2},0,0) +t^{m-1} \cdot 1_1\big)
=
\gamma  (t^1+t^{m-1}+t^m ,t^2, \ldots, t^{m-2},0,0)
\end{equation*}
\begin{equation*}
=\ldots=\gamma  (|t| ,0, \ldots, 0).
\end{equation*}
where $|t|$ means the sum $t^1+...+t^m$.

Let $f(k):=\gamma  (k,0, \ldots, 0)$, $k \in \mathbb{N}\cup \{-1\}$;
we have obtained $\gamma  (t)=f(|t|)$.
The relation {\rm (\ref{ecHic4})} becomes
\begin{equation*}
\big( \gamma(t)+\alpha(t) \big)
\alpha(t+1_\mu)
-  \alpha(t+1_\mu)
=
\big( \gamma(t)+\alpha(t) \big)
\alpha(t+1_\beta)
-\alpha(t+1_\beta)
\end{equation*}
if and only if $\gamma(t)+\alpha(t)\neq 1$. Then
$\alpha(t+1_\mu)=\alpha(t+1_\beta)$;
analogously to $\gamma(\cdot)$, it is shown that there exists
$g(k)$, $k\in  \mathbb{N}$, such that $\alpha(t)=g(|t|)$.

\vspace{0.1 cm}
We denote $A(k)=
\begin{pmatrix}
f(k)+g(k) & \displaystyle -\frac{g(k)}{f(k-1)} \vspace{0.1 cm}\\
f(k) & 0
\end{pmatrix}$, $k \in \mathbb{N}$.
On the other hand, we have $A_{\beta}(t)=A(|t|)$,
$\forall t \in  \mathbb{N}^m$,
$\forall \beta \in \{1,2, \ldots, m\}$;
and immediately notice now that, in this situation,
the relationships {\rm (\ref{ecHic2})} are satisfied.

We showed that
{\it
the relations {\rm (\ref{ecHic2})} are satisfied if and only if there exist the functions
$f \colon \mathbb{N}\cup \{-1\} \to \mathbb{C}$,
$g \colon \mathbb{N}  \to \mathbb{C}$,
such that
\begin{equation*}
\gamma  (t)=f(|t|),
\quad \forall t \in \bigcup_{\beta=1}^m \big\{ t \in \mathbb{Z}^m \, \big|\,
t \geq -1_{\beta} \big\};\,\,\,
\alpha(t)=g(|t|),
\quad
\forall t \in  \mathbb{N}^m.
\end{equation*}}

We consider the recurrence
\begin{equation}
\label{ecHic5}
z(k+ 1)=A(k)z(k),
\quad \forall k \in \mathbb{N}.
\end{equation}

Let $z \colon \mathbb{N} \to \mathbb{C}^2$ be
the solution of the recurrence {\rm (\ref{ecHic5})}, with $z(0)=x_0$.
Let $x \colon \mathbb{N}^m \to \mathbb{C}^2$,
$x(t)=z(|t|)$, $\forall t \in  \mathbb{N}^m$. Then
\begin{equation*}
x(t+1_{\beta})=z(|t|+1)= A(|t|)z(|t|)=A_{\beta}(t)x(t).
\end{equation*}
Obviously $x(0, 0, \ldots, 0)=z(0)=x_0$. Hence, the following result is true.

{\it If $z \colon \mathbb{N} \to \mathbb{C}^2$
is the solution of the recurrence {\rm (\ref{ecHic5})}, with $z(0)=x_0$,
then the function
$$x \colon \mathbb{N}^m \to \mathbb{C}^2,\,\,\,
x(t)=z(|t|), \quad \forall t \in  \mathbb{N}^m$$
is the solution of the recurrence {\rm (\ref{ecHic3})}, which verifies
$x(0)=x_0$.}

Let $p \in \mathbb{N}$ and
$$C_p \colon \mathbb{N} \to \mathcal{M}_{2}(\mathbb{C}),$$
\begin{equation*}
C_p(k)=
\begin{cases}
\displaystyle \prod_{\ell=1}^p\, A(k+p-\ell), & \hbox{if}\,\,
p \geq 1 \\
\qquad \qquad
I_2, & \hbox{if}\,\, p=0.
\end{cases}
\end{equation*}

The solution of the recurrence {\rm (\ref{ecHic5})}, with $z(0)=x_0$
is $z(k)=C_k(0)x_0$, $\forall k \in \mathbb{N}$.
Hence, the solution of the recurrence {\rm (\ref{ecHic3})},
which verifies $x(0)=x_0$, is
$$x(t)=C_{|t|}(0)x_0,\,\, \forall t \in \mathbb{N}^m.$$

Let $\chi (\cdot, \cdot)$ the fundamental matrix associated to the
recurrence {\rm (\ref{ecHic3})} and let $\chi_1 (\cdot, \cdot)$
the fundamental matrix associated to the
recurrence {\rm (\ref{ecHic5})}.
We denote $\Phi(t)=\chi (t, 0)$, $t\in \mathbb{N}^m$,
and  $\Psi(k)=\chi_1 (k, 0)$, $k \in \mathbb{N}$.

From the above observations immediately yield
$\Psi(k)=C_k(0)$, $\forall k \in \mathbb{N}$,
and $\Phi(t)=C_{|t|}(0)$, $\forall t\in \mathbb{N}^m$.
Hence $\Phi(t)=\Psi(|t|)$, $\forall t\in \mathbb{N}^m$.

The foregoing first order homogeneous multiple recurrence is called {\it multi-periodic}
if its coefficients are multi-periodic or if the matrix $A_\mu(t)$ is multi-periodic.
A prominent role  in the analysis of a multi-periodic recurrence
is played by so-called {\it Floquet multipliers}.

Let us consider a {\it discrete multitime multi-periodic coefficients Samuelson-Hicks model}.
For that, suppose $\exists \mu$, $\exists \delta$ and
$\exists T \in \mathbb{N}^{*}$ such that
$A_{\mu}(t+T \cdot 1_{\delta})=A_{\mu}(t)$,
$\forall t\in \mathbb{N}^m$; equivalent to
$A(t^1+t^2+\ldots +t^m+T )=A(t^1+t^2+\ldots +t^m)$,
$\forall t\in \mathbb{N}^m$. Set $t^1=k$ and $t^{\beta}=0$, for ${\beta} \geq 2$.
One obtains $A(k+T )=A(k)$, $\forall k\in \mathbb{N}$.
And here follows immediately that
$A_{\beta}(t+T \cdot 1_{\rho})=A_{\beta}(t)$,
$\forall t\in \mathbb{N}^m$, $\forall \rho$, $\forall \beta$.

Consequently,

{\it if
$\exists \mu$, $\exists \delta$ and
$\exists T \in \mathbb{N}^{*}$ a.\^{\i}.
$A_{\mu}(t+T \cdot 1_{\delta})=A_{\mu}(t)$,
$\forall t\in \mathbb{N}^m$, then

\qquad $A_{\beta}(t+T \cdot 1_{\rho})=A_{\beta}(t)$,
$\forall t\in \mathbb{N}^m$, $\forall \rho$, $\forall \beta$

\noindent
and the function $A(\cdot)$ is in fact periodic with the period $T$;
which is equivalent to the fact that the functions
$f$ and $g$ are periodic, of multi-period $T$.}

Let us suppose that the matrices $A(k)$ are invertible;
it appears the condition $g(k)\neq 0$, $\forall k\in \mathbb{N}$.

Let us compute the matrices
$\widetilde{C}_{\beta}=C_{\beta,\, T_{\beta}}(t_0)$,
for the recurrences {\rm (\ref{ecHic3})}, {\rm (\ref{ecHic5})}, with $t_0=0$.
Obviously, in the case of the recurrence {\rm (\ref{ecHic5})},
we have $ T_{\beta}=T$, $\forall \beta$.

For {\rm (\ref{ecHic3})}, we have a single such matrix, namely $\widetilde{C}=C_T(0)=\Psi(0)$.

For the recurrence {\rm (\ref{ecHic5})}, we have
$\widetilde{C}_{\beta}=C_{\beta,\, T}(0)=C_T(0)=\widetilde{C}$.

If the matrices $A(k)$ are invertible,
then $\widetilde{C}$ is invertible;
hence, there exists $B \in \mathcal{M}_2(\mathbb{C})$, such that
$B^T=\widetilde{C}$. Obviously, this is equivalent to
$B^{T_{\beta}}=\widetilde{C}_{\beta}$
(hence $B_{\beta}=B$, $\forall \beta$).

Hence, we can apply the Proposition \ref{alfa.p12}
for the recurrence
{\rm (\ref{ecHic3})} (is in fact the case $m=1$)
and {\rm (\ref{ecHic5})} (the matrices $B_{\beta}$
commute, since they are equal).

Consequently, there exists $R(k)$ and $P(t)$ such that
\begin{equation*}
\begin{split}
&R(k+T)=R(k), \quad \forall k \in \mathbb{N},\\
&\Psi(k)=R(k)B^k, \quad \forall k \in \mathbb{N},\\
&P(t+T\cdot 1_{\beta})=P(t),
\quad \forall t \in \mathbb{N}^m,\,\,
\forall \beta,\\
& \Phi(t)=P(t)B^{|t|},
\quad \forall t \in \mathbb{N}^m.
\end{split}
\end{equation*}

But $\Phi(t)=\Psi(|t|)$
$\Longleftrightarrow$
$P(t)B^{|t|}=R(|t|)B^{|t|}$
$\Longleftrightarrow$
$P(t)=R(|t|)$.
Hence, we have obtained: $P(t)=R(|t|)$
and $\Phi(t)=R(|t|)B^{|t|}$,
$\forall t \in \mathbb{N}^m$.

Suppose that we are in case of multi-periodic recurrence of the type  {\rm (\ref{ecHic3})}
(discrete multitime multi-periodic coefficients Samuelson-Hicks),
i.e., the functions $f$ are $g$ periodic, with the period $T\geq 1$
(equivalent to $A(\cdot)$ is periodic with the period $T$).

The matrix
\begin{equation*}
\widetilde{C}=
\widetilde{C}_{\beta}=
C_{\beta,\, T_{\beta}}(0)=
C_{\beta,\, T}(0)=C_T(0)
=
\displaystyle \prod_{j=1}^T A(T-j)
\end{equation*}
is called {\it monodromy matrix} associated to the (multi-periodic) recurrence
{\rm (\ref{ecHic3})}.

According the Proposition {\rm \ref{alfa.p9}}, $c)$, we have
$\Psi(k+T\cdot 1_{\beta})=
\Psi(k) \cdot \widetilde{C}$,\,
$\forall k \in \mathbb{N}$.
By induction, it follows
$\Psi(k+pT\cdot 1_{\beta})=
\Psi(k) \cdot (\widetilde{C})^p$,\,
$\forall p\in \mathbb{N}$, $\forall k \in \mathbb{N}$.

{\it
The Floquet multipliers of the multi-periodic recurrence {\rm (\ref{ecHic3})},
are the two roots of the quadratic equation
$$\lambda^{2}- (Tr\,\,\widetilde{C})\lambda + \det \widetilde{C}=0.$$

It is easy to see that
$ \det \widetilde{C}=\displaystyle \frac{f(T-1)}{f(-1)}
\displaystyle \prod_{j=0}^{T-1} g(j)$.}

\section*{Acknowledgments}

The work has been funded by the Sectoral Operational Programme Human Resources
Development 2007-2013 of the Ministry of European Funds through
the Financial Agreement POSDRU/159/1.5/S/132395.

Partially supported by University Politehnica of Bucharest and by Academy of Romanian Scientists.
Special thanks goes to Prof. Dr. Ionel \c Tevy, who was willing to participate in our discussions
about multivariate sequences and to suggest the title ``multiple recurrences".

\end{document}